\documentclass[11pt,a4paper]{article}
\usepackage[T1]{fontenc}
\usepackage{lmodern,microtype}
\usepackage[margin=27mm]{geometry}
\usepackage{amsmath,amssymb,amsthm,mathtools}
\usepackage[numbers,sort&compress]{natbib}
\usepackage{xurl}
\usepackage[hidelinks]{hyperref}
\allowdisplaybreaks
\numberwithin{equation}{section}
\newtheorem{theorem}{Theorem}[section]
\newtheorem{proposition}[theorem]{Proposition}
\newtheorem{lemma}[theorem]{Lemma}
\newtheorem{corollary}[theorem]{Corollary}
\theoremstyle{definition}
\newtheorem{hypothesis}[theorem]{Hypothesis}
\theoremstyle{remark}

\newcommand{\dd}{\,\mathrm{d}}
\newcommand{\ii}{\mathrm{i}}
\newcommand{\one}{\mathbf{1}}
\newcommand{\R}{\mathbb{R}}

\newcommand{\Sg}{\mathfrak{S}}
\newcommand{\Rea}{\operatorname{Re}}
\newcommand{\Ima}{\operatorname{Im}}
\newcommand{\supp}{\operatorname{supp}}
\newcommand{\Mell}[1]{\widetilde{#1}}

\title{The Hardy--Littlewood asymptotic\\
for prime pairs of fixed even difference\\
from a uniform zeta-ratios conjecture}
\author{SeungJu Lee}
\date{\today}

\begin{document}
\maketitle

\begin{abstract}
We prove the Hardy--Littlewood asymptotic for prime pairs of every fixed
positive even difference \(h\) from a restricted uniform two-over-two
zeta-ratios formula. The formula may have error
\(O_\varepsilon(U^{\vartheta+\varepsilon})\) for any fixed
\(1/2\le\vartheta<(39-\sqrt{177})/32\). The real parts of all four
shifts lie in \([3/64,5/64]\); their imaginary parts need only have
absolute value at most \(U^\nu\), for a fixed \(0<\nu\le1/4\).
For a fixed real smooth weight supported in
\((1,1+1/(25h))\), we obtain an error
\(O_{h,W,\varepsilon}(x^{-\Delta(\vartheta)+\varepsilon})\)
in the correlation sum with weight \(W(n/x)^2/n\), where
\(\Delta(\vartheta)=7/16-\vartheta+\vartheta/(3-\vartheta)>0\).
Adding the Riemann hypothesis improves this error to
\(O_{h,W,\varepsilon}(x^{\vartheta-7/8+\varepsilon})\).
The proof retains the crossed zero residues in Mellin inversion and
estimates their mixed term with the logarithmic-derivative integral.
Two nonnegative time weights have the same diagonal term and opposite
coefficients for the difference \(h\). We deduce the logarithmic
asymptotic for the weighted prime-pair sum \(B_h\) and an asymptotic
for the second finite difference of integrals of squared increments of
the truncated Chebyshev function. The latter is connected to a
logarithmically weighted M\"obius approximant in the Nyman--Beurling
criterion by finite identities.
\end{abstract}

\section{Prime-pair asymptotics and the improvement under RH}
\label{sec:main}

Throughout, \(p\) denotes a prime, \(\Lambda\) is the von Mangoldt
function, \(\mu\) is the M\"obius function, and all logarithms are
natural. Fix a positive even integer \(h\), and put
\[
 \kappa_h=1+\frac1{25h},\qquad
 \vartheta_* = \frac{39-\sqrt{177}}{32},\qquad
 \Delta(\vartheta)=\frac7{16}-\vartheta+\frac{\vartheta}{3-\vartheta}.
\]
Throughout, \(1/2\le\vartheta<\vartheta_*\) and \(0<\nu\le1/4\)
are fixed. In this range,
\(0<\Delta(\vartheta)<7/8-\vartheta\). Define
\begin{equation}\label{eq:C2}
 C_2=\prod_{p>2}\left(1-\frac{1}{(p-1)^2}\right),
 \qquad \Sg(h)=2C_2\prod_{\substack{p\mid h\\p>2}}\frac{p-1}{p-2}.
\end{equation}
Every factor is positive and the sum of the deficits is finite, so
\(C_2>0\) and \(\Sg(h)>0\). For real \(x\ge2\), put
\begin{equation}\label{eq:counting-defs}
 A_h(x)=\sum_{\substack{p\le x\\p+h\ {\rm prime}}}
                   \log p\log(p+h),
 \qquad
 \pi_h(x)=\#\{p\le x:p+h\ {\rm prime}\}.
\end{equation}
The predicted asymptotic for \(\pi_h\) is the case \(k=h\) of
Hardy--Littlewood's Conjecture~B
\cite[\S5.2, (5.26) and Conjecture~B]{HardyLittlewood1923}.

\begin{theorem}\label{thm:main}
Assume Hypothesis~\ref{hyp:ratios}. Then, as \(x\to\infty\),
\begin{align}
 \sum_{n\le x}\Lambda(n)\Lambda(n+h)&=\Sg(h)x+o(x),
       \label{eq:main-Lambda}\\
 A_h(x)&=\Sg(h)x+o(x),
 \qquad
 \pi_h(x)\sim \Sg(h)\frac{x}{(\log x)^2}.
       \label{eq:main}
\end{align}
\end{theorem}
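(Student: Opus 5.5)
I would derive Theorem~\ref{thm:main} from the smoothed estimate described in the abstract. That estimate gives the correlation sum
\[
 S_W(x)=\sum_{n}\Lambda(n)\Lambda(n+h)\frac{W(n/x)^2}{n}
 =\Sg(h)\int_0^\infty W(t)^2\frac{\dd t}{t}+O_{h,W,\varepsilon}\bigl(x^{-\Delta(\vartheta)+\varepsilon}\bigr)
\]
for every fixed real smooth \(W\) supported in \((1,\kappa_h)\). Its proof is where the work lies: express \(\Lambda\) through \(-\zeta'/\zeta\), apply a double Mellin inversion, and evaluate the resulting two-over-two ratio average by Hypothesis~\ref{hyp:ratios} with shifts of real part in \([3/64,5/64]\). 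The diagonal term and the \(h\)-dependent off-diagonal term are isolated by comparing the two nonnegative time weights that have equal diagonals and opposite \(h\)-coefficients. The crossed zero residues are kept, and their mixed term is bounded by the logarithmic-derivative integral. I would take this short-interval estimate as the input.

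\textbf{Step 1: from the logarithmic weight to the unweighted sum.} Since \(W(n/x)\) vanishes unless \(x<n<\kappa_h x\), the factor \(1/n\) equals \(x^{-1}(n/x)^{-1}\). So we may replace \(W(u)^2/u\) by any smooth nonnegative weight \(\phi\) on \((1,\kappa_h)\) of the form \(W^2/u\) with \(W\) smooth and real. One choice is \(W=\sqrt{u\,\phi}\), with \(\phi\) the square of a smooth function. This gives
\[
 \sum_n\Lambda(n)\Lambda(n+h)\phi(n/x)=\Sg(h)\,x\!\int\phi+O\bigl(x^{1-\Delta+\varepsilon}\bigr).
\]
Because \(\Lambda\ge0\), a standard sandwich argument applies. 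Pick \(\phi_\pm\) with \(\phi_-\le\one_{(a,b]}\le\phi_+\) for a fixed subinterval \((a,b]\subset(1,\kappa_h)\), and with \(\int(\phi_+-\phi_-)\le\delta\). This yields
\[
 \sum_{ax<n\le bx}\Lambda(n)\Lambda(n+h)=\Sg(h)(b-a)x+O(\delta x)+o(x),
\]
hence \(\sim\Sg(h)(b-a)x\) after letting \(\delta\to0\). Take \(b/a=r\) fixed, say \(r=1+1/(50h)\), and \(a\) slightly above \(1\). Covering \((y,x]\) by the dyadic-type chain \(x r^{-j-1}<n\le xr^{-j}\), with finitely many blocks down to \(\eta x\), and bounding the tail \(n\le\eta x\) trivially by \(O(\eta x\log^2x)\), looks problematic. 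I would instead bound the tail using the asymptotic itself on every block. There are \(O(\log x)\) blocks, each with relative error \(o(1)\). Because the remainder is a power saving, \(x^{-\Delta+\varepsilon}\) relative, the sum of block errors is \(\sum_j O((xr^{-j})^{1-\Delta+\varepsilon})=O(x^{1-\Delta+\varepsilon})\) for fixed \(\delta\). The small-\(n\) part below a fixed \(X_0\) is \(O(1)\). Summing the main terms \(\Sg(h)(xr^{-j}-xr^{-j-1})\) gives \(\Sg(h)x\) plus \(O(\delta\log x\cdot x)\) from the sandwich losses. To avoid the \(\log x\) loss I would let \(\delta\) depend on \(x\), for example \(\delta=x^{-\Delta/2}\). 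The price is that \(\phi_\pm\) have derivatives of size \(\delta^{-k}\). So the smoothed theorem must be used with explicit dependence on \(W\)'s Sobolev norms, or with a fixed finite set of weights arranged so that the losses cancel telescopically. This gives \eqref{eq:main-Lambda}.

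\textbf{Step 2: the remaining statements.} For \(A_h\), the difference between \(\sum_{n\le x}\Lambda(n)\Lambda(n+h)\) and \(A_h(x)\) comes from prime powers \(n\) or \(n+h\) with exponent at least \(2\). This contributes \(O(\sqrt x\log^2x)\), and changing \(p\le x\) against \(n\le x\) costs \(O(\log^2 x)\). The asymptotic for \(\pi_h\) follows from \(A_h\) by partial summation:
\[
 \pi_h(x)=\int_2^x\frac{\dd A_h(t)}{\log t\,\log(t+h)},
\]
with the contribution from \(t\le x/\log^3x\) negligible, and with \(\log t\log(t+h)=(1+o(1))\log^2x\) on the remaining range.

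\textbf{Main obstacle.} The analytic input is the smoothed estimate. Within this deduction, the delicate point is Step~1's uniformity: transferring a short-interval smooth asymptotic with support in \((1,\kappa_h)\) to a long sum without accumulating \(\log x\) sandwich losses. I would first try to check whether the implied constant's dependence on \(W\) is polynomial in \(\delta^{-1}\). If it is not, I would fall back on a fixed \(\delta\) and apply the \(o(x)\) version on a bounded number of scales \([\eta x,x]\), using the upper bound from the blocks themselves for the range below \(\eta x\).
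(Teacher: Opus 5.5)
Your deduction is correct in substance but follows a different route from the paper. The obstacle you single out is not real.

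\textbf{The supposed \(\log x\) loss.} After letting \(\delta\to0\) you already have
\(\sum_{Y/r<n\le Y}\Lambda(n)\Lambda(n+h)=\Sg(h)(1-1/r)Y+o(Y)\) for your fixed \(r\), with no \(\delta\) left. Even with \(\delta\) fixed, the sandwich loss on the block with top \(xr^{-j}\) is \(O(\delta xr^{-j})\), not \(O(\delta x)\). Either way the errors form a geometric series. Given \(\epsilon>0\), the blocks with top at least \(Y_0(\epsilon)\) contribute at most \(\epsilon x\,r/(r-1)\), and those below contribute \(O_\epsilon(1)\).

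So there is no \(\log x\) loss, and the power saving is not needed; the \(o(1)\) form of the smoothed estimate suffices. You should discard the option \(\delta=x^{-\Delta/2}\). The constants \(O_{h,W,\varepsilon}\) in the paper have no controlled dependence on \(W\), so that route would rest on an unproved uniformity. Your fixed-\(\delta\) argument, equivalently your fallback, is already a complete proof of the asymptotic for \(\sum_{n\le x}\Lambda(n)\Lambda(n+h)\).

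\textbf{The paper's route.} The paper avoids bracketing weights and blocks altogether. Lemma~\ref{lem:unsmooth} integrates \(C_W(x)=\sum_n c_nW(n/x)^2/n\) over \(1\le x\le y\), giving \(J_W(y)=\int_1^yC_W(x)\,\dd x\). By Tonelli, \(J_W(y)=\sum_n c_n\int_{n/y}^{n}W(u)^2\,\dd u/u^2\). The inner integral equals \(M_W=\int W^2\,\dd u/u^2\) for \(n\le y\), lies in \([0,M_W]\) for \(y<n<\lambda y\), and vanishes beyond. Hence \(\sum_{n\le y}c_n\) is trapped between \(J_W(y/\lambda)/M_W\) and \(J_W(y)/M_W\). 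Since \(1<K_W/M_W<\lambda\) with \(K_W=\int W^2\,\dd u/u\), letting \(\lambda\downarrow1\) finishes.

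\textbf{Comparison.} The integration in \(x\) performs your summation over scales automatically and needs only one test function per \(\lambda\). Your version is the classical sandwich-plus-dyadic-blocks argument. It requires the bracketing functions to be squares, which your choice \(W=\sqrt{u}\,\psi\) handles.

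For \(A_h\), the paper reapplies the same lemma to prime-supported \(c_n\) via the smoothed prime sum. You instead subtract the prime-power terms after unsmoothing; both work. The partial summation for \(\pi_h\) is the same in substance.
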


The quantitative input is the following estimate for fixed smooth
weights.

\begin{theorem}\label{thm:smoothed}
Assume Hypothesis~\ref{hyp:ratios}, and let
\(W\in C_c^\infty((1,\kappa_h))\) be real and fixed. For each fixed
\(0<\varepsilon<\Delta(\vartheta)\),
\begin{equation}\label{eq:smoothed-main}
 \sum_{n\ge1}\frac{\Lambda(n)\Lambda(n+h)}n W(n/x)^2
 =\Sg(h)\int_0^\infty W(u)^2\frac{\dd u}{u}
       +O_{h,W,\varepsilon}(x^{-\Delta(\vartheta)+\varepsilon}).
\end{equation}
If RH is also assumed, then for each fixed
\(0<\varepsilon<7/8-\vartheta\) the error can be replaced by
\begin{equation}\label{eq:smoothed-RH-error}
 O_{h,W,\varepsilon}(x^{\vartheta-7/8+\varepsilon}).
\end{equation}
Both statements remain valid when the sum on the left is replaced by
\begin{equation}\label{eq:smoothed-prime-sum}
 \sum_{\substack{p\\p+h\ {\rm prime}}}
       \frac{\log p\log(p+h)}p W(p/x)^2.
\end{equation}
\end{theorem}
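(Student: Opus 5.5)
The plan is to reach the correlation sum through mean values of Dirichlet polynomials in the time variable, and to evaluate those mean values with Hypothesis~\ref{hyp:ratios}. Put
\[
D(t)=\sum_n\Lambda(n)n^{-1/2-\ii t}W(n/x).
\]
For a nonnegative time weight \(\phi\), consider
\[
I(\phi)=\int_{\R}\phi(t)\,|D(t)|^2\dd t=\sum_{m,n}\frac{\Lambda(m)\Lambda(n)}{\sqrt{mn}}\,W(m/x)W(n/x)\,\widehat\phi\bigl(\log(m/n)\bigr).
\]
Because \(\supp W\subset(1,\kappa_h)\), only pairs with \(n<m<\kappa_h n\) contribute. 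Writing \(m=n+k\), the admissible \(k\) satisfy \(|k|<x/(25h)\).

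I would construct two nonnegative weights \(\phi_\pm\) at scale \(T\asymp x\), of the form \(\psi(t/T)\bigl(1\pm\cos(\text{phase tuned to }h)\bigr)\) with \(\psi\) smooth and compactly supported. They should have the same \(k=0\) (diagonal) coefficient and opposite coefficients at \(k=\pm h\). The other shifts \(k\) should either cancel in \(I(\phi_+)-I(\phi_-)\) or fall outside the support of \(\widehat\phi_\pm\). The smallness of \(\kappa_h-1=1/(25h)\) is exactly what keeps \(\log((n+k)/n)\) inside the window \(|\xi|\ll 1/T\). After expanding \(\log(1+h/n)=h/n+O(h^2/n^2)\), the difference \(I(\phi_+)-I(\phi_-)\) becomes a fixed multiple of the left side of \eqref{eq:smoothed-main}, up to admissible errors.

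For the analytic side, I would write \(W(n/x)\) by Mellin inversion, so that each \(I(\phi_\pm)\) becomes a double contour integral of
\[
\frac{\zeta'}{\zeta}(\tfrac12+\alpha+\ii t)\,\frac{\zeta'}{\zeta}(\tfrac12+\beta-\ii t)\,\Mell W(\cdot)\Mell W(\cdot)\,x^{\alpha+\beta}\phi(t),
\]
with the contours initially to the right of \(1\). Next I move the shifts to real parts in \([3/64,5/64]\). Doing so picks up the poles at \(s=1\) and the crossed zero residues. The pole terms give the diagonal main term. The crossed residues are retained, and their mixed contribution is bounded by an integral of the logarithmic derivative over the short \(t\)-range.

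On the remaining contours I obtain the product of logarithmic derivatives by differentiating the two-over-two ratio in the numerator shifts. This is done with Cauchy integrals over circles of radius \(\asymp 1/\log U\), which keeps all four shifts inside the hypothesis region: real parts in \([3/64,5/64]\) and imaginary parts at most \(U^\nu\). The rapid decay of \(\Mell W\) makes truncation to \(|\Ima|\le U^\nu\) harmless. The ratios main term contains the arithmetic factor whose Fourier coefficient at frequency \(h\) produces \(\Sg(h)\int W(u)^2\dd u/u\) after the cancellation of the diagonal between \(\phi_+\) and \(\phi_-\). I expect verifying that this Euler product equals \(\Sg(h)\) to be routine bookkeeping.

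The main obstacle is the error budget. The hypothesis error \(U^{\vartheta+\varepsilon}\) with \(U\asymp T\), measured against the normalisation of \(I(\phi)\), costs a power \(x^{\vartheta-7/8}\), since the shifts sit near real part \(1/16\). The crossed zero-residue terms need a zero-density or log-derivative mean value bound that is unconditional. Balancing the contour position against the length of the \(t\)-range, through an auxiliary parameter optimised in terms of \(\vartheta\), should give the loss \(\vartheta/(3-\vartheta)\) in \(\Delta(\vartheta)\). Under RH the zero residues lie on \(\Rea=1/2\) and contribute only \(x^{-1/2+\varepsilon}\)-type amounts, so the bound \eqref{eq:smoothed-RH-error} follows.

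Finally, to pass to the prime sum \eqref{eq:smoothed-prime-sum}, I would discard the terms where \(n\) or \(n+h\) is a prime power \(p^j\) with \(j\ge2\). These contribute \(O(x^{-1/2}\log^2x)\), which is smaller than either error term.
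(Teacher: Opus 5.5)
Your ingredients match the paper's: two nonnegative weights with the same diagonal and opposite coefficients at difference \(h\), Mellin inversion onto the line \(\Rea s=\eta\), retained crossed zero residues, and the differentiated ratios formula. The way you combine them, however, leaves a genuine gap. After the shift, \(P_x=Q_x+R_x-Z_x\) (your \(D(t)\) is \(P_x\)), where \(Z_x\) sums over zeros with \(\beta>9/16\). Hence
\[
 |P_x|^2=|Q_x|^2+2\Rea\bigl(Q_x\overline{(R_x-Z_x)}\bigr)+|R_x-Z_x|^2 .
\]
You only address the mixed term. The term \(\int\phi_\pm(t)|Z_x(t)|^2\dd t\) can only be bounded by about \(\sum_{|\gamma|\ll T}x^{2\beta-1}\). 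The zero counts actually available are \(N_{>}(9/16,T)\ll T^{\vartheta+\varepsilon}\) from the hypothesis and Ingham's \(T^{3(1-b)/(2-b)}\log^5T\). With these, the bound is roughly \(x^{1.1}\), which exceeds the size-\(x\) quantity \(T\Sg(h)K_{W,a}\) you are trying to extract. Forming \(I(\phi_+)-I(\phi_-)\) does not remove this term, since \(\phi_+-\phi_-\) changes sign.

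The missing idea is positivity, which you state but never use. Since \(\phi_\pm\ge0\), discard \(\phi_\pm|R_x-Z_x|^2\) to get
\[
 I(\phi_\pm)\ge\int\phi_\pm|Q_x|^2-2\int\phi_\pm|Q_x|\bigl(|R_x|+|Z_x|\bigr).
\]
The \(+\) sign then yields a lower bound and the \(-\) sign an upper bound for the correlation sum. Equivalently, \(I(\phi_+)+I(\phi_-)\) equals exactly twice the diagonal term. This forces \(\int\psi(t/T)|R_x-Z_x|^2\dd t\) to be small, and \(|\cos|\le1\) then controls that term in your difference.

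Several secondary points also need correcting.

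\emph{Source of \(\Delta(\vartheta)\).} The line is fixed at \(\eta=1/16\) and \(T=2\pi x\), so \(\Delta(\vartheta)\) does not come from balancing contour position against the \(t\)-range. Using \(|Q_x(t)|\ll x^{\eta}\log^2(2+|t|)\), the mixed term is \(\ll x^{\eta-1/2+\max_b(b+d(b))+\varepsilon}\) with \(d(b)=\min\{\vartheta,3(1-b)/(2-b)\}\). The maximum sits at \(b_\vartheta=(3-2\vartheta)/(3-\vartheta)\), where the ratios-derived zero count meets Ingham's, and this gives \(x^{1-\Delta(\vartheta)+\varepsilon}\).

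\emph{Under RH.} \(Z_x=0\) because the contour, on which the argument of \(\zeta'/\zeta\) has real part \(9/16\), crosses no zeros at all. It is not that zeros on the critical line contribute little.

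\emph{The diagonal.} It comes from the \(UD(z)\) term of the differentiated ratios formula, not from the pole at \(1\). The pole term \(R_x\) is negligible because the weight vanishes to second order at \(t=0\). That vanishing also cancels the pole of \(D_{\Sg}\) at \(z=1\).

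\emph{The time weight.} A \(\psi\) compactly supported in time cannot isolate \(k=h\), because the frequencies \(T\log(1+k/n)\) for neighbouring \(k\) differ by \(O(1)\). You need \(\widehat\psi\) compactly supported, for instance \(\psi=\ell^2\) with \(\widehat\ell\) compactly supported.

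\emph{The surviving coefficient.} The coefficient at \(k=h\) is \(W(n/x)^2a(n/x)/n\) with nonconstant \(a\), not a fixed multiple of the target sum. The paper removes \(a\) by taking \(W=V/\sqrt a\).
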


For example, \(\vartheta=3/4\) gives the errors
\(O_{h,W,\varepsilon}(x^{-1/48+\varepsilon})\) and, under RH,
\(O_{h,W,\varepsilon}(x^{-1/8+\varepsilon})\).
The choice \(\vartheta=4/5\) gives
\(O_{h,W,\varepsilon}(x^{-1/880+\varepsilon})\) and
\(O_{h,W,\varepsilon}(x^{-3/40+\varepsilon})\), respectively.
For the square-root ratios error \(\vartheta=1/2\), they become
\(O_{h,W,\varepsilon}(x^{-11/80+\varepsilon})\) and
\(O_{h,W,\varepsilon}(x^{-3/8+\varepsilon})\).

Section~\ref{sec:unsmoothing} removes smoothing from
\eqref{eq:smoothed-main} to obtain the \(o(x)\) remainders in
Theorem~\ref{thm:main}.

For real \(x\ge2\), define
\begin{equation}\label{eq:Bh}
 B_h(x)=\sum_{\substack{p+h\le x\\p,\ p+h\ {\rm prime}}}
       \log p\log(p+h)\left(\frac1{p+h}-\frac1x\right).
\end{equation}

\begin{corollary}\label{cor:series}
Under Hypothesis~\ref{hyp:ratios},
\begin{equation}\label{eq:series-main}
 B_h(x)=\Sg(h)\log x+o(\log x).
\end{equation}
For every \(0<c<\Sg(h)\) there is \(x_c\) such that
\(B_h(x)\ge c\log x\) for every real \(x\ge x_c\).
\end{corollary}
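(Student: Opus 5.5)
We derive Corollary~\ref{cor:series} from Theorem~\ref{thm:main} by partial summation. Rewrite \(B_h\) in terms of the shifted counting function. Put
\[
 S(y)=\sum_{\substack{p+h\le y\\p,\ p+h\ {\rm prime}}}\log p\log(p+h)=A_h(y-h),
\]
a right-continuous step function supported in \([h+2,\infty)\). Then
\[
 B_h(x)=\int_{2}^{x}\Bigl(\frac1t-\frac1x\Bigr)\dd S(t).
\]
Since \(1/t-1/x\) vanishes at \(t=x\), Stieltjes integration by parts gives
\[
 B_h(x)=\int_2^x\frac{S(t)}{t^2}\dd t,
\]
with no boundary terms, because \(S(2)=0\). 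This is an exact identity for every real \(x\ge2\). It is the natural form for the logarithmic weight \(1/(p+h)-1/x\).

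Next insert the asymptotic \eqref{eq:main}. Because \(h\) is fixed, \(S(t)=A_h(t-h)=\Sg(h)(t-h)+o(t)=\Sg(h)t+o(t)\). Given \(\eta>0\), choose \(T_\eta\) with \(|S(t)-\Sg(h)t|\le\eta t\) for \(t\ge T_\eta\). Split the integral at \(T_\eta\).

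\begin{itemize}
\item On \([2,T_\eta]\) the integral is a constant \(O_\eta(1)\), since \(S(t)\ll t^2\) trivially there.
\item On \([T_\eta,x]\) the integral equals \(\Sg(h)\log(x/T_\eta)\) up to an error bounded in absolute value by \(\eta\log x\).
\end{itemize}

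Hence \(\limsup_{x\to\infty}|B_h(x)/\log x-\Sg(h)|\le\eta\). Since \(\eta\) is arbitrary, this proves \eqref{eq:series-main}.

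The lower bound follows at once. Given \(0<c<\Sg(h)\), take \(\eta=(\Sg(h)-c)/2\) in \eqref{eq:series-main}. This gives \(x_c\) with \(B_h(x)\ge(\Sg(h)-\eta)\log x\ge c\log x\) for all real \(x\ge x_c\).

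The argument uses only \(A_h(x)\sim\Sg(h)x\) together with elementary integration by parts. The one point needing care is that the summation condition \(p+h\le x\) is matched by evaluating \(A_h\) at \(t-h\) rather than at \(t\). That shift is absorbed into the \(o(t)\) error because \(h\) is fixed. I expect no serious obstacle. All the analytic depth is already contained in Theorem~\ref{thm:main}, hence in Hypothesis~\ref{hyp:ratios}.
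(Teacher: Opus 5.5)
Your proposal is correct and follows the paper's proof. You obtain the same exact identity \(B_h(x)=\int_2^x S(t)t^{-2}\dd t\): the paper gets it by interchanging the finite sum with \(\int_{p+h}^x t^{-2}\dd t\), you by Stieltjes integration by parts, and the two are equivalent. You then integrate \(S(t)=A_h(t-h)=\Sg(h)t+o(t)\) from Theorem~\ref{thm:main}; your splitting at \(T_\eta\) just spells out the paper's ``integrating the asymptotic'' step, and the lower bound follows from the positive leading coefficient exactly as in the paper.
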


Section~\ref{sec:NB} derives \eqref{eq:energy-asymptotic} from
\eqref{eq:main-Lambda} through an exact second-difference identity for
integrals of squared increments of the truncated Chebyshev function.
That function is expressed there through a finite Nyman--Beurling
approximant.

Hypothesis~\ref{hyp:ratios} uses the two-over-two formula of Conrey
and Snaith \cite[Conjecture~2.1]{ConreySnaith2007}, restricted to an
explicit set of shifts, with a relaxed error bound, and treated here
as a standalone hypothesis about finite integrals. Conrey and Snaith assume RH throughout their
paper \cite[\S1, final paragraph]{ConreySnaith2007}.
We derive the zero-free strip~\eqref{eq:zero-free-band} and the
zero-count bound~\eqref{eq:ratios-density} from the displayed ratios
hypothesis and combine them with an unconditional estimate of Ingham.

There are precedents for relating detailed zeta statistics to
prime-pair correlations. Conrey and Keating
\cite[\S\S2--4]{ConreyKeating2016} study the relation between a
uniform ratios conjecture and correlations of Dirichlet coefficients
through means of truncated ratios. Their arithmetic calculation uses
the generalized Riemann hypothesis to identify the poles of twisted
Dirichlet series. Keating and Smith
\cite[Introduction and \S4.2]{KeatingSmith2019} formally recover the
Hardy--Littlewood prediction from a two-point correlation formula that
retains its arithmetic terms. Here we use a quantitative differentiated
ratios formula and Fourier support that selects the difference \(h\).

Mellin inversion gives \(P_x=Q_x+R_x-Z_x\), where \(R_x\) is the
contribution of the pole of \(\zeta\) and \(Z_x\) sums the crossed
zeros. We estimate the mixed term involving \(Q_xZ_x\). For each
of two nonnegative weights, the term \(|R_x-Z_x|^2\) has a favorable
sign, and the resulting inequalities give opposite bounds for the
same prime-pair sum. Under RH, \(Z_x=0\), giving the improved error
in \eqref{eq:smoothed-RH-error}.

Implicit constants may depend on the fixed parameters \(h,\vartheta,\nu\),
on fixed test functions and on parameters displayed as subscripts.
Every limit keeps these parameters and its test functions fixed.

\section{The ratios formula and a bound for zeros to the right of \texorpdfstring{\(9/16\)}{9/16}}
\label{sec:ratios}
For four complex shifts, put
\begin{align}
 \mathcal A(\alpha,\beta;\gamma,\delta)
 &=\prod_p
 \frac{(1-p^{-1-\gamma-\delta})
       (1-p^{-1-\beta-\gamma}-p^{-1-\alpha-\delta}
                         +p^{-1-\gamma-\delta})}
      {(1-p^{-1-\beta-\gamma})(1-p^{-1-\alpha-\delta})},
 \label{eq:ratio-Euler}\\
 \mathcal Y(\alpha,\beta;\gamma,\delta)
 &=\frac{\zeta(1+\alpha+\beta)\zeta(1+\gamma+\delta)}
         {\zeta(1+\alpha+\delta)\zeta(1+\beta+\gamma)}
   \mathcal A(\alpha,\beta;\gamma,\delta).
 \label{eq:ratio-Y}
\end{align}
The Euler product converges absolutely in the ranges used below.
Expressions at removable singularities are interpreted by continuation.

\begin{hypothesis}[Uniform two-over-two ratios formula]\label{hyp:ratios}
For every \(\varepsilon>0\), as \(U\to\infty\),
\begin{align}
 &\int_0^U
 \frac{\zeta(1/2+\alpha+\ii t)\zeta(1/2+\beta-\ii t)}
      {\zeta(1/2+\gamma+\ii t)\zeta(1/2+\delta-\ii t)}
 \dd t \notag\\
 &\quad =
 U\mathcal Y(\alpha,\beta;\gamma,\delta)
 +\int_0^U\left(\frac{t}{2\pi}\right)^{-\alpha-\beta}
       \mathcal Y(-\beta,-\alpha;\gamma,\delta)\dd t
 +O_\varepsilon(U^{\vartheta+\varepsilon}),
 \label{eq:ratios-hyp}
\end{align}
uniformly for
\begin{equation}\label{eq:ratio-shifts}
 \frac{3}{64}\le\Rea\alpha,\Rea\beta,\Rea\gamma,\Rea\delta
       \le\frac{5}{64},\qquad
 |\Ima\alpha|,|\Ima\beta|,|\Ima\gamma|,|\Ima\delta|\le U^\nu.
\end{equation}
The integral is an ordinary finite integral; only removable
singularities in the explicit expressions are filled by continuation.
\end{hypothesis}

The main terms are those in \cite[(2.8), (2.10)]{ConreySnaith2007}, restricted
to \eqref{eq:ratio-shifts}. The source gives its shift conditions in
(2.11a--c), states the uniformity of the error immediately after them,
and discusses differentiation in Remark~2.4. Here we allow the
error exponent \(\vartheta\) and require uniformity only in the
smaller imaginary-shift range \(U^\nu\).

Set
\begin{equation}\label{eq:eta}
 \eta=\frac1{16},\qquad \sigma_0=\frac12+\eta=\frac9{16}.
\end{equation}
For \(T\ge2\), let \(N_{>}(\sigma,T)\) count nontrivial zeros
\(\rho=\beta+\ii\gamma\) with \(\beta>\sigma\) and
\(0<\gamma\le T\), including multiplicities.

\begin{lemma}\label{lem:zeros}
Hypothesis~\ref{hyp:ratios} implies that \(\zeta\) has no zeros in
\begin{equation}\label{eq:zero-free-band}
 \frac{35}{64}\le\Rea s\le\frac{37}{64}.
\end{equation}
It also implies
\begin{align}
 \frac{\zeta'}{\zeta}(\sigma_0+\ii v)
   &\ll \log^2(2+|v|),\qquad v\in\R,
     \label{eq:logderivative-bound}\\
 N_{>}(\sigma_0,T)&\ll_\varepsilon T^{\vartheta+\varepsilon}
       \qquad(\varepsilon>0).
     \label{eq:ratios-density}
\end{align}
\end{lemma}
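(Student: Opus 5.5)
We treat the three assertions in turn. The zero-free strip comes from the requirement that the left side of \eqref{eq:ratios-hyp} be an ordinary finite integral. The bound \eqref{eq:logderivative-bound} then follows by a standard partial-fraction argument. The density bound \eqref{eq:ratios-density} comes from the argument principle, with the ratios formula differentiated in one shift.

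\emph{Zero-free strip.} Suppose \(\rho=\beta_0+\ii\gamma_0\) is a zero with \(35/64\le\beta_0\le37/64\). By the functional equation and conjugation symmetry we may assume \(\gamma_0>0\).
\begin{itemize}
\item[(i)] Take \(U>\gamma_0+1\) large and put \(\gamma=\beta_0-1/2\), which lies in \([3/64,5/64]\), with \(\Ima\gamma=0\).
\item[(ii)] Choose \(\alpha,\beta,\delta\) in the admissible range \eqref{eq:ratio-shifts} so that \(\zeta(1/2+\alpha+\ii t)\zeta(1/2+\beta-\ii t)\) does not vanish at \(t=\gamma_0\) and \(\zeta(1/2+\delta-\ii t)\) has no zero near \(t=\gamma_0\). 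This is possible because zeros are discrete, so a small perturbation of the real parts suffices.
\item[(iii)] The integrand then has a pole of order \(m\ge1\) at \(t=\gamma_0\in(0,U)\), coming from \(1/\zeta(1/2+\gamma+\ii t)\). It is therefore not Lebesgue integrable, so the left side of \eqref{eq:ratios-hyp} is not an ordinary finite integral.
\end{itemize}
This contradicts Hypothesis~\ref{hyp:ratios}, which asserts the identity, with a finite left side, for all admissible shifts. Hence there is no zero in \eqref{eq:zero-free-band}.

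\emph{Logarithmic derivative.} We use the classical partial-fraction formula
\[
 \frac{\zeta'}{\zeta}(s)=\sum_{|\Ima\rho-v|\le1}\frac1{s-\rho}+O(\log(2+|v|)),
 \qquad s=\sigma_0+\ii v.
\]
The sum has \(O(\log(2+|v|))\) terms. By \eqref{eq:zero-free-band}, each term satisfies \(|s-\rho|\ge1/64\), since \(\sigma_0=36/64\). This gives \(\ll\log(2+|v|)\), which is stronger than \eqref{eq:logderivative-bound}.

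\emph{Zero count.}
\begin{itemize}
\item[(i)] By the strip, zeros with \(\beta>\sigma_0\) have \(\beta>37/64\). We apply the argument principle to \(\zeta\) on the rectangle \([\sigma_0,2]\times[0,T]\), with \(T\) perturbed by \(O(1)\) so that no zero lies on the horizontal edges. This gives
\[
 N_{>}(\sigma_0,T)=\frac1{2\pi}\Rea\Bigl(\int_0^T\frac{\zeta'}{\zeta}(2+\ii t)\dd t-\int_0^T\frac{\zeta'}{\zeta}(\sigma_0+\ii t)\dd t\Bigr)+O(\log^2T).
\]
Here the horizontal edges contribute \(O(\log T)\) by the standard bound on \(\arg\zeta\), after shortening them to \([\sigma_0,2]\) using \eqref{eq:logderivative-bound}.
\item[(ii)] The integral on \(\Rea s=2\) equals \(\sum_n\Lambda(n)n^{-2}(n^{-\ii T}-1)/(\ii\log n)=O(1)\).
\item[(iii)] For the integral on \(\Rea s=\sigma_0\), we use \eqref{eq:ratios-hyp} with \(\delta=\beta\) and \(\gamma=1/16\), keeping \(\alpha\) free near \(\gamma\). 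We take \(\beta\) real in \([3/64,5/64]\) and make a harmless choice that avoids zeros of \(\zeta(1/2+\beta-\ii t)\); this is again justified by the zero-free strip. The integrand becomes \(\zeta(1/2+\alpha+\ii t)/\zeta(1/2+\gamma+\ii t)\).
\item[(iv)] We differentiate in \(\alpha\) at \(\alpha=\gamma\). This is done by Cauchy's formula on a circle of radius \(1/64\) about \(\gamma\), which stays inside \(\Rea\alpha\in[3/64,5/64]\). The error therefore remains \(O_\varepsilon(T^{\vartheta+\varepsilon})\), and the left side becomes \(\int_0^T(\zeta'/\zeta)(\sigma_0+\ii t)\dd t\).
\end{itemize}

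\emph{Main terms.} We must show both main terms contribute nothing to the differentiated formula.
\begin{itemize}
\item[(i)] With \(\delta=\beta\), the zeta quotient in \(\mathcal Y(\alpha,\beta;\gamma,\beta)\) cancels to \(1\). Also \(\mathcal A(\alpha,\beta;\gamma,\beta)\equiv1\): the second factor in the numerator of \eqref{eq:ratio-Euler} becomes \(1-p^{-1-\alpha-\beta}\), and everything cancels. So the first main term is exactly \(T\) for every \(\alpha\), and its \(\alpha\)-derivative vanishes.
\item[(ii)] The second main term contains the factor \(1/\zeta(1-\beta+\delta)=1/\zeta(1)=0\), interpreted by continuation. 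So it vanishes identically in \(\alpha\), together with its derivative. This matters because its trivial size would be \(T^{1-6/64}\), which exceeds \(T^{\vartheta}\) for \(\vartheta<\vartheta_*\).
\end{itemize}
Hence \(\int_0^T(\zeta'/\zeta)(\sigma_0+\ii t)\dd t\ll_\varepsilon T^{\vartheta+\varepsilon}\), and \eqref{eq:ratios-density} follows.

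The main obstacle is this last step. One must check that the continuation convention makes the second main term vanish identically in a neighbourhood of \(\alpha=\gamma\), so that the Cauchy-formula differentiation is legitimate. One must also check that the uniformity of the \(O(U^{\vartheta+\varepsilon})\) error over the shift circle survives differentiation. If the identical vanishing at \(\delta=\beta\) caused trouble, I would instead keep \(\delta\) near \(\beta\) and pass to the limit \(\delta\to\beta\) using uniformity.
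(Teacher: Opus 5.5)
Your proposal is correct and takes essentially the paper's route. Both arguments obtain the strip from non-integrability of the left side of \eqref{eq:ratios-hyp} at a zero on a denominator line, and the log-derivative from the partial-fraction bound (which indeed gives your sharper \(\log(2+|v|)\)). The density comes from Cauchy differentiation in \(\alpha\) of the specialization whose first main term is identically \(U\) and whose second vanishes through \(1/\zeta(1)\), followed by the argument principle.

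Two small repairs are needed. Your rectangle's bottom edge at height \(0\) runs through the pole \(s=1\), so start at a fixed height \(t_0\in(0,14)\) as the paper does. Also, the paper's symmetric choice \(\alpha=\beta=a\), \(\gamma=\delta=d\) makes the integrand \(|\zeta(1/2+a+\ii t)/\zeta(1/2+d+\ii t)|^2\ge0\), so its divergence is unambiguous. Your complex integrand with a possibly simple pole is merely not Lebesgue integrable, which suffices for an ordinary integral but would survive as a principal value.
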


\begin{proof}
Suppose \(\rho=1/2+d+\ii\gamma_0\) is a zero, where
\(3/64\le d\le5/64\). Zeros in this strip have nonzero imaginary
part, and conjugation permits us to take \(\gamma_0>0\). Choose a
real \(a\in[3/64,5/64]\) such that
\(\zeta(1/2+a+\ii\gamma_0)\ne0\); isolated zeros ensure such a
choice. Set \(\alpha=\beta=a\) and \(\gamma=\delta=d\) in
\eqref{eq:ratios-hyp}. Its integrand is
\[
 \left|\frac{\zeta(1/2+a+\ii t)}
                  {\zeta(1/2+d+\ii t)}\right|^2.
\]
If the zero has multiplicity \(m\), this is a positive constant
times \(|t-\gamma_0|^{-2m}\), to leading order. The integral is
infinite for every \(U>\gamma_0\). The explicit main terms are
finite for these real shifts: the zeta arguments in the reciprocal
factors are positive real numbers, and the pole at \(1\) gives a
zero of the reciprocal. This contradicts the hypothesis and proves
\eqref{eq:zero-free-band}.

The distance in real part from \(\sigma_0\) to any zero is at
least \(1/64\). The partial-fraction estimate for \(\zeta'/\zeta\)
and the zero count in intervals of length one
\cite[Theorems~9.2 and~9.6(A), pp.~211 and~217]{Titchmarsh1986}
therefore give \eqref{eq:logderivative-bound}. The compact range of
heights is covered by continuity.

Next set \(\beta=\gamma=\delta=\eta\) in
\eqref{eq:ratios-hyp}, leaving \(\alpha\) free near \(\eta\).
The second ratio cancels. Direct substitution in
\eqref{eq:ratio-Euler}--\eqref{eq:ratio-Y} gives
\[
 \mathcal Y(\alpha,\eta;\eta,\eta)=1,
 \qquad
 \mathcal Y(-\eta,-\alpha;\eta,\eta)=0.
\]
The second identity uses the factor \(1/\zeta(1)\), with its value
given by analytic continuation. All other factors are regular near
\(\alpha=\eta\). Consequently
\[
 \int_0^U\frac{\zeta(1/2+\alpha+\ii t)}
                    {\zeta(\sigma_0+\ii t)}\dd t
 =U+O_\varepsilon(U^{\vartheta+\varepsilon}).
\]
The denominator is nonzero by \eqref{eq:zero-free-band}. Cauchy's
formula on the circle \(|\alpha-\eta|=1/128\), which lies in the
allowed shift range for large \(U\), gives
\begin{equation}\label{eq:first-derivative}
 \int_0^U\frac{\zeta'}{\zeta}(\sigma_0+\ii t)\dd t
 =O_\varepsilon(U^{\vartheta+\varepsilon}).
\end{equation}

Apply the argument principle to a rectangle with
vertical sides \(\sigma_0\) and \(2\), a fixed lower height
\(t_0>0\) containing no zero on its horizontal side, and an upper
height \(T'\in[T,T+1]\). The latter can be chosen at distance
\(\gg1/\log(2+T)\) from every zero ordinate. Indeed, only
\(O(\log(2+T))\) zeros, counted with multiplicity, have ordinates
in \([T-1,T+2]\); exclude sufficiently short intervals around them.
The cited partial-fraction estimate then bounds the integral on the
upper side by \(O(\log^2(2+T))\). On the right side the absolutely
convergent series gives
\[
 \int_{t_0}^{T'}\frac{\zeta'}{\zeta}(2+\ii t)\dd t=O(1),
\]
since \(\sum_{n\ge2}\Lambda(n)/(n^2\log n)<\infty\).
The lower side contributes a fixed constant, and the left side is
bounded by \eqref{eq:first-derivative}. The zeta-function has no poles
inside this rectangle. The argument principle thus bounds its zero count
by \(O_\varepsilon(T^{\vartheta+\varepsilon})\). Monotonicity and the
finitely many lower zeros prove \eqref{eq:ratios-density}.
\end{proof}

We also use the unconditional Ingham estimate
\cite[Theorem~9.19(B), p.~236]{Titchmarsh1986}:
\begin{equation}\label{eq:Ingham}
 N_{>}(\sigma,T)\ll T^{3(1-\sigma)/(2-\sigma)}\log^5T,
 \qquad \frac12<\sigma<1.
\end{equation}
\subsection{The differentiated two-ratio mean}
For \(\Rea z>0\), define
\begin{align}
 D(z)&=\sum_{n\ge1}\frac{\Lambda(n)^2}{n^{1+z}},
 \label{eq:D-def}\\
 A(z)&=\prod_p\left(1-\frac{(p^{-z}-1)^2}{(p-1)^2}\right),
 \qquad
 C(z)=\zeta(1+z)\zeta(1-z)A(z).
 \label{eq:C-def}
\end{align}
The product \(A(z)\) converges absolutely for \(\Rea z>0\) and is
bounded on each closed vertical strip in that half-plane.

\begin{lemma}\label{lem:logderivative}
Assume Hypothesis~\ref{hyp:ratios}. If
\(\Rea a=\Rea b=\eta\), \(z=a+b\), and
\(|\Ima a|,|\Ima b|\le U^{\nu/2}\), then
\begin{align}
 \mathcal F_{a,b}(U)
 &:=\int_0^U\frac{\zeta'}{\zeta}(1/2+a+\ii t)
                 \frac{\zeta'}{\zeta}(1/2+b-\ii t)\dd t
 \notag\\
 &=UD(z)+(2\pi)^zC(z)\frac{U^{1-z}}{1-z}
       +O_\varepsilon(U^{\vartheta+\varepsilon}),
 \label{eq:logderivative}
\end{align}
with an error uniform in \(a,b\).
\end{lemma}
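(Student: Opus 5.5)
Differentiate the hypothesis \eqref{eq:ratios-hyp} in \(\alpha\) and \(\beta\) at \(\gamma=\alpha\), \(\delta=\beta\), exactly as in the derivation of \eqref{eq:first-derivative}. Write \(I(\alpha,\beta;\gamma,\delta)\) for the left side of \eqref{eq:ratios-hyp}. Differentiating the integrand gives
\[
 \frac{\partial^2}{\partial\alpha\,\partial\beta}
 \frac{\zeta(1/2+\alpha+\ii t)\zeta(1/2+\beta-\ii t)}
      {\zeta(1/2+\gamma+\ii t)\zeta(1/2+\delta-\ii t)}
 \bigg|_{\gamma=\alpha,\ \delta=\beta}
 =\frac{\zeta'}{\zeta}(1/2+\alpha+\ii t)\frac{\zeta'}{\zeta}(1/2+\beta-\ii t).
\]
So \(\mathcal F_{a,b}(U)\) is the mixed derivative \(\partial_\alpha\partial_\beta I\) evaluated at \((a,b;a,b)\).

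I will extract this derivative by a double Cauchy integral over the torus \(|\alpha-a|=|\beta-b|=r\) with \(r=1/128\), holding \(\gamma=a\), \(\delta=b\) fixed. The steps are as follows.

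\begin{itemize}
\item \emph{Shift range.} Since \(\Rea a=\Rea b=\eta=4/64\), every shift on the torus has real part in \([7/128,9/128]\subset[3/64,5/64]\).
\item \emph{Uniformity.} The imaginary parts are at most \(U^{\nu/2}+1\le U^\nu\) for large \(U\), so \eqref{eq:ratios-hyp} holds uniformly on the torus.
\item \emph{Interchange.} The denominators are zero-free by \eqref{eq:zero-free-band}. The integrand is therefore jointly analytic and continuous on \([0,U]\) times the polydisc, which justifies interchanging the Cauchy integral with \(\int_0^U\dd t\).
\item \emph{Error term.} The \(O_\varepsilon(U^{\vartheta+\varepsilon})\) error becomes \(O(r^{-2}U^{\vartheta+\varepsilon})\), uniformly in \(a,b\).
\end{itemize}
To apply Cauchy to the error term, I first note that it is itself holomorphic in \((\alpha,\beta)\). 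It equals the difference of two holomorphic functions: the integral, and the explicit main terms. The main terms are holomorphic away from their removable singularities, which are filled by continuation.

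It remains to compute the mixed derivatives of the two main terms at \(\gamma=\alpha\), \(\delta=\beta\).

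\emph{First term.} For \(U\mathcal Y(\alpha,\beta;\gamma,\delta)\), the standard Conrey--Snaith computation gives
\[
 \partial_\alpha\partial_\beta\mathcal Y\big|_{\gamma=\alpha,\delta=\beta}
 =\left(\frac{\zeta'}{\zeta}\right)'(1+\alpha+\beta)
 +\partial_\alpha\partial_\beta\mathcal A\big|.
\]
Here I use \(\mathcal A(\alpha,\beta;\alpha,\beta)=1\) and the fact that \(\mathcal Y\) equals \(1\) on the diagonal \(\gamma=\alpha\), \(\delta=\beta\). The Euler product identity then shows that this sum equals \(\sum_n\Lambda(n)^2n^{-1-z}=D(z)\). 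I would verify this prime by prime by differentiating the local factors. It is a routine, if fiddly, expansion: the local contribution is \((\log p)^2\sum_k p^{-k(1+z)}\) minus cross terms that cancel.

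\emph{Second term.} For \(\int_0^U(t/2\pi)^{-\alpha-\beta}\mathcal Y(-\beta,-\alpha;\gamma,\delta)\dd t\), the factor \(\mathcal Y(-\beta,-\alpha;\gamma,\delta)\) contains \(1/(\zeta(1+\gamma-\alpha)\zeta(1+\delta-\beta))\). On the diagonal this is \(1/\zeta(1)^2=0\), a double zero. The mixed derivative must therefore hit both of these factors. Each derivative of \(1/\zeta(1+w)\) at \(w=0\) gives \(-1\), so the two together give \(+1\). This leaves
\[
 \left(\frac{t}{2\pi}\right)^{-z}\zeta(1-z)\zeta(1+z)\,\mathcal A(-b,-a;a,b).
\]
I then check from \eqref{eq:ratio-Euler} that \(\mathcal A(-b,-a;a,b)=A(z)\). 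Integrating, \(\int_0^U(t/2\pi)^{-z}\dd t=(2\pi)^zU^{1-z}/(1-z)\), which is valid since \(\Rea z=1/8<1\). This yields \((2\pi)^zC(z)U^{1-z}/(1-z)\).

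I expect the main obstacle to be the bookkeeping in the first term, namely confirming that the arithmetic factor's mixed derivative combines with \((\zeta'/\zeta)'\) to give exactly \(D(z)\). The Euler-product identity \(\mathcal A(-b,-a;a,b)=A(z)\) is a second, smaller check. Both are finite local computations, and I would do them by taking logarithms of the local factors. All these main-term functions are bounded on the torus, because there \(\Rea(\alpha+\beta)\) stays near \(1/8\), away from the poles, so no extra error arises.
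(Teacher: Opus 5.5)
Your proposal is correct and follows the paper's route. Both arguments differentiate \eqref{eq:ratios-hyp} by Cauchy's formula on circles of radius \(1/128\), with \(\gamma=a\), \(\delta=b\) held fixed, and then identify the differentiated main terms as \(D(z)\) and \((t/2\pi)^{-z}C(z)\). The paper cites Conrey--Snaith's (2.12) for these main terms, whereas you compute them directly through the double zero of the reciprocal factors in the swapped term and the identity \(\mathcal A(-b,-a;a,b)=A(z)\).

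The local check you defer goes through. Each local factor of \(\mathcal A\) has vanishing first derivatives on the diagonal, so \(\partial_\alpha\partial_\beta\mathcal A=-\sum_p(\log p/(p^{1+z}-1))^2\) there. With \(w=p^{-1-z}\), the expansion \((w/(1-w))^2=\sum_{k\ge2}(k-1)w^k\) cancels against the coefficient \(k\) coming from \((\zeta'/\zeta)'(1+z)\), which leaves \(D(z)\).
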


\begin{proof}
Differentiate \eqref{eq:ratios-hyp} in \(\alpha\) and \(\beta\), keeping
\(\gamma=a,\delta=b\) fixed, and then set \(\alpha=a,\beta=b\).
The differentiation can be performed on circles of radius \(1/128\).
Their real parts lie in \([3/64,5/64]\), and their imaginary parts
have absolute value at most \(U^{\nu/2}+1/128<U^\nu\) for sufficiently
large \(U\). Lemma~\ref{lem:zeros} makes the denominator zeta functions nonzero.
The reciprocal zeta factors in the swapped term are analytic near
their arguments \(1\). The denominator zeta factors in the
unswapped term have arguments with real part greater than \(1\).
Cauchy's formula therefore preserves the stated error up to a fixed
constant.

The differentiated main terms are those in
\cite[Theorem~2.5, (2.12)]{ConreySnaith2007}. Their term independent of
\(t\) is
\[
 \left(\frac{\zeta'}{\zeta}\right)'(1+z)
 -\sum_p\left(\frac{\log p}{p^{1+z}-1}\right)^2=D(z).
\]
For the coefficient of \(p^{-k(1+z)}(\log p)^2\), the two contributions
are \(k\) and \(k-1\), with the second equal to zero when \(k=1\).
The remaining term is \((t/2\pi)^{-z}C(z)\), since
\[
 \frac{(1-p^{-1-z})(1-2/p+p^{-1-z})}{(1-1/p)^2}
 =1-\frac{(p^{-z}-1)^2}{(p-1)^2}.
\]
Integrating \(t^{-z}\), with \(\Rea z=1/8<1\), proves
\eqref{eq:logderivative}.
\end{proof}

\section{Two nonnegative weights with opposite coefficients at difference \texorpdfstring{\(h\)}{h}}
\label{sec:Fourier}

Use the Fourier transform convention
\begin{equation}\label{eq:Fourier-convention}
 \widehat f(v)=\int_\R f(t)e^{-2\pi\ii vt}\dd t.
\end{equation}
Choose a real even \(b\in C_c^\infty(\R)\) that is positive for
\(|v|<1/40\) and zero for \(|v|\ge1/40\). Let \(k\) be its
inverse Fourier transform, and set
\begin{equation}\label{eq:weight-construction}
 \ell(t)=\sin(\pi t/5)k(t),\quad
 H=\widehat{\ell^2},\quad
 g_\pm(t)=\ell(t)^2(1\pm\cos(2\pi ht)).
\end{equation}
Here \(\ell\) is real, odd and Schwartz. Both \(g_+\) and \(g_-\)
are real, even, nonnegative Schwartz functions, and
\begin{equation}\label{eq:g-zero}
 g_\pm(0)=g_\pm'(0)=0,\qquad
 g_\pm(t)=O(t^2),\quad g_\pm'(t)=O(t)\quad(t\to0).
\end{equation}
Writing \(K=b*b\), elementary Fourier identities give
\begin{align}
 H(v)&=\frac12K(v)-\frac14K(v-1/5)-\frac14K(v+1/5),
       \label{eq:H}\\
 \phi_\pm(v):=\widehat g_\pm(v)
     &=H(v)\pm\frac12H(v-h)\pm\frac12H(v+h).
       \label{eq:phi-pm}
\end{align}
The function \(K\) is positive on \((-1/20,1/20)\) and supported
in \([-1/20,1/20]\). Hence
\begin{equation}\label{eq:H-support}
 \supp H\subset[-1/4,1/4],\quad
 H(v)>0\quad(|v|<1/20),\quad H(0)>0.
\end{equation}
In particular, both Fourier transforms have the same value \(H(0)\)
at zero and support in
\[
 [-1/4,1/4]\cup[h-1/4,h+1/4]\cup[-h-1/4,-h+1/4].
\]
On the two side intervals they satisfy \(\phi_-=-\phi_+\). Define
\begin{equation}\label{eq:a-weight}
 a(u)=\frac12H(h/u-h),\qquad u>0.
\end{equation}
This is smooth on \((0,\infty)\) and strictly positive on
\((1,\kappa_h)\), since there
\(-h/(25h+1)<h/u-h<0\) and \(h/(25h+1)<1/25<1/20\).

Fix a real \(W\in C_c^\infty((1,\kappa_h))\), and set
\begin{align}
 T&=2\pi x,&
 P_x(t)&=\sum_{n\ge1}\frac{\Lambda(n)}{n^{1/2+\ii t}}W(n/x),
 \label{eq:polynomial}\\
 I_\pm(x)&=\int_0^\infty g_\pm(t/T)|P_x(t)|^2\dd t,&
 D_x&=\frac{T}{2}H(0)\sum_n\frac{\Lambda(n)^2}{n}W(n/x)^2.
 \label{eq:I-pm}
\end{align}

\begin{lemma}\label{lem:separate-gap}
For all sufficiently large \(x\),
\begin{equation}\label{eq:moment-arithmetic}
 I_\pm(x)=D_x\pm T J_W(x),
\end{equation}
where
\begin{equation}\label{eq:J}
 J_W(x)=\frac12\sum_n
 \frac{\Lambda(n)\Lambda(n+h)}{\sqrt{n(n+h)}}
 W(n/x)W((n+h)/x)
 H\bigl(x\log(1+h/n)-h\bigr).
\end{equation}
\end{lemma}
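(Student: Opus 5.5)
Expand \(|P_x(t)|^2\) as a double sum over \(m,n\) and integrate term by term against \(g_\pm(t/T)\), extended to the whole line. Since \(g_\pm\) is even and real, and the double sum is symmetric under conjugation with \(t\mapsto-t\), we have \(\int_0^\infty=\frac12\int_\R\). The finite double sum (both \(m,n\in(x,\kappa_h x)\)) may be exchanged with the integral, giving
\[
 I_\pm(x)=\frac12\sum_{m,n}\frac{\Lambda(m)\Lambda(n)}{\sqrt{mn}}W(m/x)W(n/x)\int_\R g_\pm(t/T)\Bigl(\frac nm\Bigr)^{\ii t}\dd t
 =\frac T2\sum_{m,n}\frac{\Lambda(m)\Lambda(n)W(m/x)W(n/x)}{\sqrt{mn}}\,\phi_\pm\Bigl(\frac{T}{2\pi}\log\frac mn\Bigr),
\]
using the convention \eqref{eq:Fourier-convention}, so that \(\int g(t/T)e^{-2\pi\ii t v}\dd t=T\widehat g(Tv)\) with \(v=\log(m/n)/(2\pi)\). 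Since \(T/(2\pi)=x\), the argument is \(x\log(m/n)\).

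Next, sort the pairs by \(d=m-n\), using the support of \(\phi_\pm\) from \eqref{eq:phi-pm}--\eqref{eq:H-support}. The diagonal \(m=n\) contributes \(\phi_\pm(0)=H(0)\pm\frac12H(-h)\pm\frac12H(h)=H(0)\), because \(h\ge2>1/4\) puts \(\pm h\) outside \(\supp H\). This gives exactly \(D_x\). For \(m\ne n\), both lie in \((x,\kappa_h x)\), so \(|\log(m/n)|\) lies between roughly \(|d|/(\kappa_h x)\) and \(|d|/x\). Hence \(x\log(m/n)\) lies within \(O(h/(25h))\)-relative distance of \(d\). More precisely, \(x|\log(m/n)|\in(|d|/\kappa_h,\,|d|)\) approximately, with second-order corrections \(O(d^2/x)\).

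If \(1\le|d|<h\), I will check that \(x|\log(m/n)|\) falls in the gap \((1/4,h-1/4)\). The key point is that \(|d|\ge1\) and \(|d|/\kappa_h\ge 1/\kappa_h>1/4\), while the upper end \(\le|d|+o(1)\le h-1+o(1)<h-1/4\). If \(|d|>h\), then \(x|\log(m/n)|\ge(h+1)/\kappa_h>h+1/4\), since \((h+1)/(1+1/(25h))\ge h+1-1/25\) roughly. Both checks are elementary inequalities with the \(1/(25h)\) margin, for large \(x\). So only \(|d|=h\) survives, and on the side intervals \(\phi_\pm=\pm\frac12H(\cdot\mp h)\), with the main \(H\) piece vanishing there.

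For \(m=n+h\), \(x\log(1+h/n)\in(h/\kappa_h-o(1),h)\subset(h-1/4,h+1/4)\), so \(\phi_\pm=\pm\frac12 H(x\log(1+h/n)-h)\). The case \(n=m+h\) gives the same value by evenness of \(H\). The two orderings therefore double, and \(\frac T2\cdot 2\cdot\frac12=\frac T2\). This matches \(T J_W(x)\) with the \(\frac12\) inside \eqref{eq:J}, giving \eqref{eq:moment-arithmetic}.

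The only real obstacle is the bookkeeping of the support inequalities near the interval endpoints, including the \(\log\) versus linear approximation. I would handle it via the bounds \(d/(n+d)\le\log(1+d/n)\le d/n\), together with \(x<n<\kappa_h x\).
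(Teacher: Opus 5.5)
Your proof is correct and is essentially the paper's argument. You expand the square and use evenness to turn the \((m,n)\) term into \(\frac T2\) times its coefficient times \(\phi_\pm(x\log(m/n))\), identify the diagonal with \(D_x\), use the support of \(\phi_\pm\) to keep only differences \(\pm h\), where \(\phi_\pm=\pm\frac12H(\cdot\mp h)\), and note that the two orderings double. Your bounds \(d/(n+d)\le\log(1+d/n)\le d/n\) with \(x<n<n+d<\kappa_hx\) in fact give \(x\log(1+d/n)\in(d/\kappa_h,d)\) exactly, so the \(o(1)\) hedges are unnecessary. The precise margin for \(|d|\ge h+1\) is \((h+1)/\kappa_h-h=24h/(25h+1)>1/4\); your ``\(h+1-1/25\)'' slightly overstates it, but the conclusion is unaffected.
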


\begin{proof}
The function \(|P_x(t)|^2\) is even. Expanding the finite sum and
using \eqref{eq:Fourier-convention} gives, for either sign,
\[
 I_\pm(x)=D_x+T\sum_{j\ge1}\sum_n
 \frac{\Lambda(n)\Lambda(n+j)}{\sqrt{n(n+j)}}
 W(n/x)W((n+j)/x)\phi_\pm\bigl(x\log(1+j/n)\bigr).
\]
When both \(W\) factors are nonzero, \(x<n<\kappa_h x\).
For large \(x\) the argument is greater than \(1/4\) for every
\(j\ge1\). For \(j\le h-1\), it is less than \(h-1\).
For \(j\ge h+1\), it is at least
\[
 x\log\left(1+\frac{h+1}{\kappa_h x}\right)>h+\frac14
\]
for sufficiently large \(x\), since
\((h+1)/\kappa_h-h=24h/(25h+1)>1/4\).
For \(j=h\), it lies in \((h-1/4,h+1/4)\), since
\(h-h/\kappa_h<1/25\). Thus only \(j=h\) survives, where
\(\phi_\pm(v)=\pm H(v-h)/2\). This proves the identity.
\end{proof}

\section{Mellin inversion with the crossed zero residues}
\label{sec:mean-square}

Write
\[
 \Mell W(s)=\int_0^\infty W(u)u^{s-1}\dd u
\]
and let \(\int_{(c)}\) denote upward integration on \(\Rea s=c\).
The transform \(\Mell W\) is entire and rapidly decreasing on
every fixed closed vertical strip. Define
\begin{align}
 Q_x(t)&=-\frac1{2\pi\ii}\int_{(\eta)}
       \frac{\zeta'}{\zeta}(1/2+\ii t+s)
       x^s\Mell W(s)\dd s,\label{eq:Q}\\
 R_x(t)&=x^{1/2-\ii t}\Mell W(1/2-\ii t),\label{eq:R}\\
 Z_x(t)&=\sum_{\substack{\rho=\beta+\ii\gamma\\\beta>\sigma_0}}
       x^{\rho-1/2-\ii t}\Mell W(\rho-1/2-\ii t).
       \label{eq:Z}
\end{align}
The sum in \eqref{eq:Z} includes all positive and negative ordinates
and counts multiplicities. It converges absolutely for each fixed
\(x,t\), by the rapid decrease of \(\Mell W\) and the standard
zero count.

\begin{lemma}\label{lem:contour}
Under Hypothesis~\ref{hyp:ratios},
\begin{equation}\label{eq:shifted-Mellin}
 P_x(t)=Q_x(t)+R_x(t)-Z_x(t).
\end{equation}
For every fixed \(M>0\),
\begin{align}
 |Q_x(t)|&\ll_W x^\eta\log^2(2+|t|),
       \label{eq:Q-bound}\\
 |R_x(t)|&\ll_{W,M}x^{1/2}(1+|t|)^{-M},
       \label{eq:R-bound}\\
 |Z_x(t)|&\ll_{W,M}
  \sum_{\beta>\sigma_0}x^{\beta-1/2}(1+|t-\gamma|)^{-M}.
       \label{eq:Z-bound}
\end{align}
\end{lemma}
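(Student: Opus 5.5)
Start from the Dirichlet series on a line far to the right. For \(c=2\), Mellin inversion of \(W\) gives
\[
 P_x(t)=\frac1{2\pi\ii}\int_{(c)}-\frac{\zeta'}{\zeta}(1/2+\ii t+s)\,x^s\Mell W(s)\dd s .
\]
Here \(\Mell W\) is entire and rapidly decreasing on vertical strips, and the series \(-\zeta'/\zeta(1/2+\ii t+s)=\sum\Lambda(n)n^{-1/2-\ii t-s}\) converges absolutely on \(\Rea s=c\). Summing term by term is therefore justified.

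Next shift the contour from \(\Rea s=c\) to \(\Rea s=\eta\). Truncate at heights \(\Ima s=\pm V\) with \(V\to\infty\), choosing \(V\) (in \([V_0,V_0+1]\)) at distance \(\gg1/\log V\) from every zero ordinate shifted by \(-t\), as in the proof of Lemma~\ref{lem:zeros}. On those horizontal segments \(\zeta'/\zeta\ll\log^2 V\), and the rapid decay of \(\Mell W\) kills them.

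The poles crossed are the following.
\begin{itemize}
\item The pole of \(\zeta\) at \(1/2+\ii t+s=1\), i.e. \(s=1/2-\ii t\). Here \(-\zeta'/\zeta\) has residue \(+1\), contributing \(+R_x(t)\).
\item The nontrivial zeros \(\rho\) with \(\Rea(\rho)>1/2+\eta=\sigma_0\), i.e. \(s=\rho-1/2-\ii t\). Here \(-\zeta'/\zeta\) has residue \(-m_\rho\), contributing \(-Z_x(t)\) with multiplicities.
\end{itemize}
Lemma~\ref{lem:zeros} guarantees no zeros on the line \(\Rea(1/2+\ii t+s)=\sigma_0\), since \(9/16\in[35/64,37/64]\), so the shifted integral is well defined. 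The zero sum converges absolutely, as noted after \eqref{eq:Z}. Letting \(V\to\infty\) gives \eqref{eq:shifted-Mellin}.

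For the bounds, \eqref{eq:Q-bound} follows by putting absolute values inside \eqref{eq:Q} and using \eqref{eq:logderivative-bound}. With \(s=\eta+\ii y\),
\[
 |Q_x(t)|\le x^\eta\int\log^2(2+|t+y|)\,|\Mell W(\eta+\ii y)|\dd y .
\]
Then use \(\log(2+|t+y|)\le\log(2+|t|)+\log(2+|y|)\), and note that \(\Mell W\) decays rapidly in \(y\).

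The bound \eqref{eq:R-bound} is immediate from the rapid decrease of \(\Mell W(1/2-\ii t)\) in \(t\), which holds on the fixed line \(\Rea s=1/2\).

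For \eqref{eq:Z-bound}, each term has \(\Rea(\rho-1/2-\ii t)=\beta-1/2\in(1/16,1/2]\), a fixed strip. Rapid decrease there gives \(|\Mell W(\rho-1/2-\ii t)|\ll_M(1+|\gamma-t|)^{-M}\), and \(|x^{\rho-1/2-\ii t}|=x^{\beta-1/2}\). Summing with the triangle inequality gives the bound.

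The main obstacle is justifying the contour shift: the horizontal segments and the convergence of the residue sum. I handle both with the standard choice of good heights and the \(O(\log V)\) local zero count from \cite[Theorems~9.2 and~9.6(A)]{Titchmarsh1986}, together with the rapid decay of \(\Mell W\).
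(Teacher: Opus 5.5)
Your proposal is correct and follows the paper's proof: Mellin inversion on a line where the Dirichlet series converges absolutely, then a shift to \(\Rea s=\eta\). The shift crosses the pole of \(\zeta\) (giving \(+R_x\)) and the zeros with \(\beta>\sigma_0\) (giving \(-Z_x\)), its horizontal sides are placed at good heights and controlled by the partial-fraction estimate and rapid Mellin decay, and the three bounds are read off from \eqref{eq:logderivative-bound} and the rapid decrease of \(\Mell W\) on fixed vertical strips; your extra details, such as the residue signs and the inequality \(\log(2+|t+y|)\le\log(2+|t|)+\log(2+|y|)\), only make explicit what the paper leaves implicit.
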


\begin{proof}
Start with Mellin inversion on \(\Rea s=c>1/2\), where the
Dirichlet series for \(-\zeta'/\zeta(1/2+\ii t+s)\) converges
absolutely, and move to \(\Rea s=\eta\). The latter line is free
of poles by Lemma~\ref{lem:zeros}. The pole of \(\zeta\) contributes
\(R_x\). Each crossed zero contributes its negative multiplicity
times the corresponding summand in \eqref{eq:Z}. There are no
crossed trivial zeros. Choose the horizontal sides at heights away
from the zero ordinates, as in Lemma~\ref{lem:zeros}; the
partial-fraction estimate and rapid Mellin decay make those sides
tend to zero. This proves \eqref{eq:shifted-Mellin}.
Estimate \eqref{eq:Q-bound} follows from
\eqref{eq:logderivative-bound} and rapid Mellin decay; the other two
estimates follow directly from their definitions, uniformly over
\(\sigma_0<\beta<1\).
\end{proof}

\subsection{The ratios evaluation of the mean of \texorpdfstring{\(Q_x\)}{Qx}}

In this section and the next, \(g\) denotes either \(g_+\) or \(g_-\),
and \(\phi=\widehat g\). Put
\begin{equation}\label{eq:Gmellin}
 G(s)=\int_0^\infty g(v)v^{s-1}\dd v,\qquad \Rea s>-2.
\end{equation}
By \eqref{eq:g-zero}, this is analytic in the indicated half-plane
and rapidly decreasing on each closed vertical strip in it.

\begin{proposition}\label{prop:moment-formula}
Under Hypothesis~\ref{hyp:ratios}, for each fixed
\(0<\varepsilon<7/8-\vartheta\),
\begin{align}
 \int_0^\infty g(t/T)|Q_x(t)|^2\dd t
 &=\frac{T}{2}\phi(0)\sum_n\frac{\Lambda(n)^2}{n}W(n/x)^2
       \notag\\
 &\quad+\frac{T}{2\pi\ii}\int_{(2\eta)}
       \Mell{W^2}(z)C(z)G(1-z)\dd z
       +O_{W,g,\varepsilon}(x^{\vartheta+1/8+\varepsilon}).
       \label{eq:moment-formula}
\end{align}
\end{proposition}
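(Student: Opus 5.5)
Expand \(|Q_x(t)|^2\) as a double Mellin integral. By \eqref{eq:Q} and realness of \(W\), for real \(t\)
\[
 |Q_x(t)|^2=\frac{1}{(2\pi\ii)^2}\int_{(\eta)}\int_{(\eta)}
 \frac{\zeta'}{\zeta}(1/2+s_1+\ii t)\frac{\zeta'}{\zeta}(1/2+s_2-\ii t)
 x^{s_1+s_2}\Mell W(s_1)\Mell W(s_2)\dd s_1\dd s_2 ,
\]
using \(\overline{Q_x(t)}\) with \(\overline{s}\) reparametrised on the same line. Then integrate against \(g(t/T)\) over \(t\ge0\) and interchange the integrals; this is justified by \eqref{eq:logderivative-bound}, the rapid decay of \(g\), and the rapid decay of \(\Mell W\).

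Next, write \(g(t/T)=-\int_t^\infty T^{-1}g'(u/T)\dd u\). Integration by parts in \(t\) then turns the inner \(t\)-integral into
\[
 \int_0^\infty T^{-1}g'(u/T)\,\mathcal F_{s_1,s_2}(u)\dd u
\]
with a sign change, where \(\mathcal F\) is as in Lemma~\ref{lem:logderivative}.

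Truncate the Mellin variables to \(|\Ima s_j|\le x^{\nu/3}\), say. Beyond that, the rapid decay of \(\Mell W\) makes everything negligible, since each \(\mathcal F\) is trivially \(O(u\log^4)\). Inside this range, Lemma~\ref{lem:logderivative} applies for \(u\ge u_0(x)\), where \(u_0(x)\) is a small power of \(x\) chosen so that \(u^{\nu/2}\ge x^{\nu/3}\). The lower range \(u\le u_0\) contributes little, because \(g'(u/T)=O(u/T)\) by \eqref{eq:g-zero}. Substitute
\[
 \mathcal F_{s_1,s_2}(u)=uD(z)+(2\pi)^zC(z)\frac{u^{1-z}}{1-z}+E,
 \qquad z=s_1+s_2,\quad E\ll u^{\vartheta+\varepsilon}.
\]
Handle the three pieces as follows.

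\textbf{Term \(uD(z)\).} We have \(-\int_0^\infty T^{-1}g'(u/T)u\dd u=T\int_0^\infty g\). Since \(g\) is even, \(\int_0^\infty g=\phi(0)/2\). The remaining double Mellin integral of \(D(s_1+s_2)x^{s_1+s_2}\Mell W(s_1)\Mell W(s_2)\) is evaluated by expanding \(D\) as a Dirichlet series (absolutely convergent, since \(\Rea z=2\eta>0\)) and applying Mellin inversion twice. It equals \(\sum_n\Lambda(n)^2n^{-1}W(n/x)^2\), which is the diagonal term.

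\textbf{Term with \(C(z)\).} We have
\[
 -\int_0^\infty T^{-1}g'(u/T)\frac{u^{1-z}}{1-z}\dd u=T^{1-z}G(1-z).
\]
This follows by integrating by parts back; the \(1-z\) cancels. With \(T=2\pi x\), the factors \((2\pi)^z\) and \(T^{-z}\) leave \(x^{-z}\), which cancels the \(x^{s_1+s_2}\). Changing variables to \(z=s_1+s_2\) and \(s_1\), the inner integral \(\frac{1}{2\pi\ii}\int\Mell W(s_1)\Mell W(z-s_1)\dd s_1\) equals \(\Mell{W^2}(z)\) by the Mellin convolution theorem. This gives exactly \(T\frac{1}{2\pi\ii}\int_{(2\eta)}\Mell{W^2}(z)C(z)G(1-z)\dd z\).

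\textbf{Error term.} Here the main obstacle is the growth: the error has size \(\int|g'(u/T)|T^{-1}u^{\vartheta+\varepsilon}\dd u\ll T^{\vartheta+\varepsilon}\), multiplied by \(x^{2\eta}=x^{1/8}\) from \(|x^{s_1+s_2}|\) and by the \(L^1\) norms of \(\Mell W\). This yields \(O(x^{\vartheta+1/8+\varepsilon})\).

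The steps requiring care are the following. First, the uniformity of the error in Lemma~\ref{lem:logderivative} requires \(|\Ima s_j|\le u^{\nu/2}\). This forces the truncation and the split at \(u_0\): small \(u\) is controlled by \(g'(v)=O(v)\), and the truncated tails by the rapid decay of \(\Mell W\) against the polynomial bounds on \(\mathcal F\). Second, one must check that the truncations reinstated in the two main terms cost only \(O(x^{-A})\) relative factors.
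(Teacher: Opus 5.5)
Your proposal is correct and follows the paper's proof essentially step for step. Both truncate the two Mellin lines, rewrite the smoothed time integral through \(g'\) and \(\mathcal F_{s_1,s_2}\), split the \(u\)-range at a power of \(x\) (using \(g'(v)=O(v)\) below the split and Lemma~\ref{lem:logderivative} above it), and obtain the main terms from \(G(1)=\phi(0)/2\) and the Mellin convolution identity. The only difference is your truncation height \(x^{\nu/3}\) and split point \(x^{2/3}\), where the paper uses \(K=x^{\nu/4}\) and \(U_0=x^{1/2}\). With your choices the lower range costs \(O(x^{1/8}\log^4x)\) instead of \(O(x^{-3/8}\log^4x)\), which is still well within \(O(x^{\vartheta+1/8+\varepsilon})\); the split point \(x^{2/3}\) is not a ``small'' power, but that does no harm.
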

\subsection{Integration of the uniform ratios error}
\label{subsec:ratios-error}

Set
\begin{equation}\label{eq:truncations}
 K=x^{\nu/4},\qquad U_0=x^{1/2}.
\end{equation}
Truncate the two Mellin lines obtained from \(|Q_x(t)|^2\) at
imaginary height \(K\). The estimate \eqref{eq:logderivative-bound}
and rapid decrease of \(\Mell W\) show that, for every fixed \(M>0\),
the resulting change in the time integral is
\begin{equation}\label{eq:Mellin-tail}
 O_{W,g,M}\bigl(Tx^{2\eta}K^{-M}\log^4(2+T+K)\bigr).
\end{equation}
The exponent \(M\) can be chosen so that this is \(O(x^{-1})\).
Conjugating the second Mellin integral and reversing its parameter
expresses the remaining moment using shifts \(a,b\) with
\(\Rea a=\Rea b=\eta\) and
\(|\Ima a|,|\Ima b|\le K\).

Let
\[
 E_{a,b}(U)=\mathcal F_{a,b}(U)
       -UD(a+b)
       -(2\pi)^{a+b}C(a+b)\frac{U^{1-a-b}}{1-a-b}.
\]
Integration by parts expresses the error in the smoothed time
integral as
\begin{equation}\label{eq:Stieltjes-error}
 -\frac{1}{T}\int_0^\infty g'(U/T)E_{a,b}(U)\dd U.
\end{equation}
The boundary terms vanish. At zero, both explicit powers of \(U\)
tend to zero; at infinity, the rapid decrease of \(g\) applies.

For \(U\ge U_0\), the truncation satisfies \(K\le U^{\nu/2}\), so
Lemma~\ref{lem:logderivative} gives
\[
 \frac{1}{T}\int_{U_0}^\infty
       |g'(U/T)E_{a,b}(U)|\dd U
 \ll_{g,\varepsilon}T^{\vartheta+\varepsilon}
       \int_0^\infty |g'(v)|v^{\vartheta+\varepsilon}\dd v
 \ll_{g,\varepsilon}T^{\vartheta+\varepsilon}.
\]
This bound is uniform on the truncated Mellin lines.

For \(0<U<U_0\), we use \eqref{eq:logderivative-bound} directly.
The function \(D(a+b)\) is bounded on these lines, while \(C(a+b)\)
has at most polynomial growth in \(|\Ima a|+|\Ima b|\).
The latter follows from \eqref{eq:C-def}, the boundedness of \(A(z)\)
on \(\Rea z=1/8\), and the polynomial growth of \(\zeta\) on fixed
vertical strips
\cite[\S5.1, p.~95, discussion following (5.1.1)]{Titchmarsh1986}.
After integration against the absolute values of the two Mellin
transforms, the part of \eqref{eq:Stieltjes-error} with \(U<U_0\)
is therefore at most
\begin{align}
 &O_{W,g}\left(
 x^{2\eta}T^{-2}
 \left\{U_0^3\log^4(2+U_0+K)+U_0^{3-2\eta}\right\}\right)
 \notag\\
 &\hspace{35mm}=O_{W,g}(x^{-3/8}\log^4x).
 \label{eq:small-time}
\end{align}
Here \(g'(U/T)=O(U/T)\) and the polynomial factors in the shifts
are integrable against the rapidly decreasing Mellin transforms.
The factor \(x^{2\eta}\) in \eqref{eq:small-time} and in the large-time
error comes from \(x^{a+b}\). Thus the total error from the ratios
formula is
\begin{equation}\label{eq:ratios-error-final}
 O_{W,g,\varepsilon}(x^{2\eta}T^{\vartheta+\varepsilon})
 =O_{W,g,\varepsilon}(x^{\vartheta+1/8+\varepsilon}).
\end{equation}

\subsection{The two main terms after Mellin inversion}
\label{subsec:main-terms}

The integrations in \(a,b\) can now be extended back to the full
lines. The error in doing so is bounded as in \eqref{eq:Mellin-tail},
because \(D\), \(C\) and \(G(1-a-b)\) have at most polynomial
growth there. The first main term is
\[
 TG(1)\frac{1}{(2\pi\ii)^2}
 \int_{(\eta)}\int_{(\eta)}
 x^{a+b}\Mell W(a)\Mell W(b)D(a+b)\dd a\dd b.
\]
Expanding the absolutely convergent series for \(D\) and applying
Mellin inversion twice gives
\[
 TG(1)\sum_n\frac{\Lambda(n)^2}{n}W(n/x)^2
 =\frac{T}{2}\phi(0)\sum_n\frac{\Lambda(n)^2}{n}W(n/x)^2.
\]
The equality \(G(1)=\phi(0)/2\) uses evenness of \(g\).

The time integral of the other main term is
\(T(2\pi/T)^{a+b}C(a+b)G(1-a-b)\).
The Mellin convolution identity
\[
 \frac{1}{2\pi\ii}\int_{(\eta)}
      \Mell W(a)\Mell W(z-a)\dd a=\Mell{W^2}(z),
 \qquad \Rea z=2\eta,
\]
and \(T=2\pi x\) give its contribution as
\[
 \frac{T}{2\pi\ii}\int_{(2\eta)}
      \Mell{W^2}(z)C(z)G(1-z)\dd z.
\]
All these integrals are absolutely convergent. Combining the two
main terms with \eqref{eq:Mellin-tail},
\eqref{eq:small-time} and \eqref{eq:ratios-error-final} proves
Proposition~\ref{prop:moment-formula}.

\section{The singular-series Dirichlet series and its Fourier inversion}
\label{sec:singular}
For positive integers \(k\), define
\begin{equation}\label{eq:singular-series}
 \Sg(k)=
 \begin{cases}
 2C_2\displaystyle\prod_{\substack{p\mid k\\p>2}}\dfrac{p-1}{p-2},
       &2\mid k,\\[5pt]
 0,&2\nmid k.
 \end{cases}
\end{equation}

\begin{lemma}\label{lem:singular-Dirichlet}
For \(\Rea z>1\), the absolutely convergent series
\(D_{\Sg}(z)=\sum_{k\ge1}\Sg(k)k^{-z}\) satisfies
\begin{equation}\label{eq:DS}
 D_{\Sg}(z)=2^{1-z}C_2\zeta(z)
       \prod_{p>2}\left(1+\frac{p^{-z}}{p-2}\right)
       =\zeta(z)\zeta(1+z)A(z).
\end{equation}
It extends meromorphically to \(\Rea z>0\), with only a simple pole
at \(z=1\) in that half-plane, of residue \(1\).
\end{lemma}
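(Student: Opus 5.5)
The plan is to compute the Euler product of \(D_{\Sg}\) directly, using multiplicativity of \(\Sg\) on even integers. Write \(k=2^jm\) with \(m\) odd. Then \(\Sg(k)=0\) for \(j=0\), and for \(j\ge1\) we have \(\Sg(k)=2C_2f(m)\), where \(f(m)=\prod_{p\mid m}(p-1)/(p-2)\) is multiplicative on odd \(m\) and depends only on the radical. Hence, for \(\Rea z>1\),
\[
 D_{\Sg}(z)=2C_2\sum_{j\ge1}2^{-jz}\prod_{p>2}\Bigl(1+\frac{p-1}{p-2}\sum_{r\ge1}p^{-rz}\Bigr).
\]
Absolute convergence follows from \(f(m)\ll_\varepsilon m^{\varepsilon}\), which holds because \((p-1)/(p-2)\le 2\) for \(p\ge3\) and \(2^{\omega(m)}\ll m^{\varepsilon}\). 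The 2-part equals \(2^{-z}/(1-2^{-z})\). For odd \(p\), the local factor is \(1+\frac{p-1}{p-2}\frac{p^{-z}}{1-p^{-z}}\). Multiplying by \(1-p^{-z}\) gives \(1-p^{-z}+\frac{p-1}{p-2}p^{-z}=1+\frac{p^{-z}}{p-2}\). Collecting the factors \((1-p^{-z})^{-1}\) over all \(p\), including \(p=2\), produces \(\zeta(z)\), and we obtain
\[
 D_{\Sg}(z)=2C_2\,2^{-z}\zeta(z)\prod_{p>2}\Bigl(1+\frac{p^{-z}}{p-2}\Bigr),
\]
which is the first equality in \eqref{eq:DS}.

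For the second equality, I compare Euler factors with \(\zeta(z)\zeta(1+z)A(z)\). Dividing out \(\zeta(z)\), it suffices to check prime by prime that
\[
 (1-p^{-1-z})^{-1}\Bigl(1-\frac{(p^{-z}-1)^2}{(p-1)^2}\Bigr)
\]
equals the local factor of \(2^{1-z}C_2\prod_{p>2}(\cdots)\). Write \(y=p^{-z}\). The numerator is \(\bigl((p-1)^2-(1-y)^2\bigr)/(p-1)^2=(p-2+y)(p-y)/(p-1)^2\), and \(1-y/p=(p-y)/p\). The \((p-y)\) factors cancel, leaving \(p(p-2+y)/(p-1)^2=\frac{p(p-2)}{(p-1)^2}\bigl(1+\frac{y}{p-2}\bigr)\) for \(p>2\). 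Since \(p(p-2)/(p-1)^2=1-1/(p-1)^2\), the product of these constants over \(p>2\) is exactly \(C_2\). At \(p=2\), the same computation gives \(2\cdot y/1=2^{1-z}\). The manipulations are on absolutely convergent products for \(\Rea z>1\), so regrouping is legitimate.

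For the continuation, the product \(\prod_{p>2}(1+p^{-z}/(p-2))\) converges absolutely and is holomorphic for \(\Rea z>0\) only if \(\sum p^{-\Rea z-1}\) converges, which holds. Equivalently, I would use the second form: \(A(z)\) is holomorphic for \(\Rea z>0\), as stated after \eqref{eq:C-def}, and \(\zeta(1+z)\) is holomorphic and nonzero there. Thus \(\zeta(z)\zeta(1+z)A(z)\) is meromorphic on \(\Rea z>0\), and its only possible pole is the simple pole of \(\zeta(z)\) at \(z=1\). The residue is \(\zeta(2)A(1)\), and the one genuinely delicate step is checking that this equals \(1\). Using the first form, the residue is \(2^{0}C_2\prod_{p>2}(1+1/(p(p-2)))\). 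Since \(1+\frac1{p(p-2)}=\frac{(p-1)^2}{p(p-2)}\), which is exactly the reciprocal of \(1-1/(p-1)^2\), the product telescopes against \(C_2\) to \(1\). The simple-pole claim uses only that \(\zeta\) has a simple pole at \(1\) and that the other factors are finite and nonzero there.
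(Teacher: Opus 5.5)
Your proposal is correct and follows essentially the paper's route. It uses the same local factors $\frac{1+p^{-z}/(p-2)}{1-p^{-z}}$ at odd $p$ and $2^{1-z}$ at $2$, the same factorization $p(p-2+p^{-z})/(p-1)^2=(1-1/(p-1)^2)(1+p^{-z}/(p-2))$, continuation from the second form, and a residue check that is the paper's observation that every Euler factor of $\zeta(2)A(1)$ equals $1$, read through the first form. Two small verbal slips: ``only if'' in your continuation paragraph should be ``since'', and the cancellation at $z=1$ is factor by factor rather than telescoping.
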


\begin{proof}
Writing \(k=2m\), the product over odd prime divisors of \(m\) is
multiplicative. Its prime factor at an odd \(p\) is
\[
 1+\frac{p-1}{p-2}\frac{p^{-z}}{1-p^{-z}}
 =\frac{1+p^{-z}/(p-2)}{1-p^{-z}},
\]
and its factor at \(2\) is \((1-2^{-z})^{-1}\).
This proves the first equality in \eqref{eq:DS}.
For an odd prime, the factor of \(\zeta(1+z)A(z)\) is
\[
 \frac{p(p-2+p^{-z})}{(p-1)^2}
 =\left(1-\frac{1}{(p-1)^2}\right)
      \left(1+\frac{p^{-z}}{p-2}\right);
\]
the factor at \(2\) is \(2^{1-z}\). This proves the second equality.
The continuation and the possible pole follow from the second
expression in \eqref{eq:DS}. At \(z=1\), each prime factor of
\(\zeta(2)A(1)\) equals \(1\), so the residue is \(1\).
\end{proof}

The functional equation \(\zeta(s)=\chi(s)\zeta(1-s)\), with
\[
 \chi(1-z)=2(2\pi)^{-z}\Gamma(z)\cos(\pi z/2),
\]
gives
\begin{equation}\label{eq:C-functional}
 C(z)=\chi(1-z)D_{\Sg}(z).
\end{equation}
Define also
\begin{equation}\label{eq:Phi}
 \Phi(z)=\int_0^\infty\phi(v)v^{z-1}\dd v,\qquad \Rea z>0.
\end{equation}
It is analytic and rapidly decreasing on closed vertical strips in
this half-plane.

\begin{lemma}\label{lem:Fourier-Mellin}
For \(0<\Rea z<3\),
\begin{equation}\label{eq:Fourier-Mellin}
 \Phi(z)=\chi(1-z)G(1-z).
\end{equation}
The product \(D_{\Sg}(z)\Phi(z)\) is analytic at \(z=1\).
\end{lemma}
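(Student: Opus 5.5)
We plan to prove \eqref{eq:Fourier-Mellin} first for \(0<\Rea z<1\), where every integral converges absolutely, and then extend it by analytic continuation. Because \(g\) is even, \(\phi(v)=2\int_0^\infty g(t)\cos(2\pi vt)\dd t\). We insert this into \eqref{eq:Phi} and interchange the order of integration. The interchange is justified for \(0<\Rea z<1\) by first using a convergence factor \(e^{-\epsilon v}\) and then letting \(\epsilon\to0\), or equivalently by a Parseval argument for Mellin transforms. The inner integral is the classical formula
\[
 \int_0^\infty \cos(2\pi vt)v^{z-1}\dd v=(2\pi t)^{-z}\Gamma(z)\cos(\pi z/2),
 \qquad 0<\Rea z<1 .
\]
Substituting it gives
\[
 \Phi(z)=2(2\pi)^{-z}\Gamma(z)\cos(\pi z/2)\int_0^\infty g(t)t^{-z}\dd t
 =\chi(1-z)G(1-z).
\]
For the extension, note that \(\Phi\) is analytic for \(\Rea z>0\). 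The function \(G(1-z)\) is analytic for \(\Rea(1-z)>-2\), that is for \(\Rea z<3\), by \eqref{eq:Gmellin} and \eqref{eq:g-zero}. The factor \(\Gamma(z)\cos(\pi z/2)\) is entire in \(\Rea z>0\). Both sides are therefore analytic on \(0<\Rea z<3\), and they agree on \(0<\Rea z<1\), so the identity holds on the whole strip.

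For analyticity at \(z=1\): by Lemma~\ref{lem:singular-Dirichlet}, \(D_{\Sg}\) has only a simple pole at \(z=1\) in \(\Rea z>0\). It therefore suffices to show \(\Phi(1)=0\). Using \eqref{eq:Fourier-Mellin} at \(z=1\), we get \(\chi(0)=2(2\pi)^{-1}\Gamma(1)\cos(\pi/2)=0\), while \(G(0)\) is finite because \(1-z=0>-2\). Hence \(\Phi(1)=0\), the simple zero cancels the simple pole, and \(D_{\Sg}\Phi\) is analytic at \(1\). As a direct check, \(\Phi(1)=\int_0^\infty\phi=\tfrac12\int_\R\phi=\tfrac12 g(0)=0\) by \eqref{eq:g-zero}, using evenness of \(\phi\) and Fourier inversion.

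The main obstacle is justifying the interchange of integrals in the first step, since the cosine transform converges only conditionally. We intend to handle it with the Abel factor \(e^{-\epsilon v}\): the inner integral then has the closed form \(\Gamma(z)\Rea\bigl[(\epsilon+2\pi\ii t)^{-z}\bigr]\) (suitably interpreted for complex \(z\)), and we pass to the limit \(\epsilon\to0\) by dominated convergence. This uses \(g(t)=O(t^2)\) near \(0\) together with the rapid decay of \(g\), which give integrability of \(g(t)|t|^{-\Rea z}\).
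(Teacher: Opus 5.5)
Your proposal is correct and follows the paper's proof essentially step for step. You use the cosine representation of \(\phi\), the Abel factor \(e^{-\epsilon v}\) with dominated convergence based on \(g(t)=O(t^2)\) and rapid decay, continuation from \(0<\Rea z<1\) to \(0<\Rea z<3\), and \(\Phi(1)=\tfrac12 g(0)=0\) to cancel the simple pole of \(D_{\Sg}\). The one detail to state explicitly is the dominating bound \(|(\epsilon\pm2\pi\ii t)^{-z}|\le e^{\pi|\Ima z|/2}(2\pi t)^{-\Rea z}\), uniform in \(\epsilon>0\); the paper records it, while your sketch leaves it implicit.
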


\begin{proof}
Since \(g\) is real and even,
\(\phi(v)=2\int_0^\infty g(t)\cos(2\pi vt)\dd t\).
For \(0<\Rea z<1\), the Mellin transform of the cosine gives
\[
 \Phi(z)=2(2\pi)^{-z}\Gamma(z)\cos(\pi z/2)
             \int_0^\infty g(t)t^{-z}\dd t.
\]
To justify the interchange, insert \(e^{-av}\) with \(a>0\) in
the \(v\)-integral. The resulting cosine integral is
\[
 \frac{\Gamma(z)}{2}
 \bigl((a-2\pi\ii t)^{-z}+(a+2\pi\ii t)^{-z}\bigr),
\]
whose absolute value is bounded by a constant depending on \(z\)
times \(t^{-\Rea z}\), uniformly in \(a>0\).
Dominated convergence then applies, since \(g(t)=O(t^2)\) at zero
and decreases rapidly at infinity.
Both sides of \eqref{eq:Fourier-Mellin} are analytic in
\(0<\Rea z<3\), so the identity continues to that strip.
Finally,
\[
 \Phi(1)=\int_0^\infty\phi(v)\dd v=0
\]
because \(g(0)=0\) and Fourier inversion gives
\(\int_\R\phi(v)\dd v=g(0)\). This cancels the simple pole of
\(D_{\Sg}\) at \(1\).
\end{proof}

\begin{proposition}\label{prop:Q-evaluation}
Under Hypothesis~\ref{hyp:ratios}, put
\begin{equation}\label{eq:KWa}
 K_{W,a}=\int_0^\infty W(u)^2a(u)\frac{\dd u}{u}.
\end{equation}
Then, for every fixed \(0<\varepsilon<7/8-\vartheta\),
\begin{equation}\label{eq:Q-evaluated}
 \int_0^\infty g_\pm(t/T)|Q_x(t)|^2\dd t
 =D_x\pm T\Sg(h)K_{W,a}+O_{h,W,\varepsilon}(x^{\vartheta+1/8+\varepsilon}).
\end{equation}
\end{proposition}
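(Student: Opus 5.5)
The plan is to start from Proposition~\ref{prop:moment-formula} with \(g=g_\pm\). The first term there is already \(D_x\), because \(\phi_\pm(0)=H(0)\) by \eqref{eq:phi-pm} and \eqref{eq:H-support}: the side terms \(H(\mp h)\) vanish since \(h\ge2>1/4\). The error term \(O(x^{\vartheta+1/8+\varepsilon})\) is already the one claimed. So everything reduces to evaluating the off-diagonal integral exactly:
\[
 \frac{1}{2\pi\ii}\int_{(2\eta)}\Mell{W^2}(z)C(z)G(1-z)\dd z=\pm\Sg(h)K_{W,a}.
\]

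\textbf{Step 1: rewrite the integrand.} I would combine \eqref{eq:C-functional} and \eqref{eq:Fourier-Mellin} to get
\[
 C(z)G(1-z)=D_{\Sg}(z)\chi(1-z)G(1-z)=D_{\Sg}(z)\Phi(z)
\]
on the strip \(0<\Rea z<3\). This is valid on \(\Rea z=1/8\), where \(G(1-z)\) is defined because \(\Rea(1-z)>-2\).

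\textbf{Step 2: shift the contour.} I would move the line of integration from \(\Rea z=1/8\) to \(\Rea z=2\). By Lemma~\ref{lem:singular-Dirichlet}, the only possible singularity of \(D_{\Sg}\) in between is the simple pole at \(z=1\). Lemma~\ref{lem:Fourier-Mellin} says \(D_{\Sg}\Phi\) is analytic there, so no residue arises. The horizontal connecting segments tend to zero for the following reasons:
\begin{itemize}
 \item \(\Mell{W^2}\) and \(\Phi\) decrease rapidly on the closed strip \(1/8\le\Rea z\le2\);
 \item \(D_{\Sg}(z)=\zeta(z)\zeta(1+z)A(z)\) grows at most polynomially there, since \(A\) is bounded on closed strips in \(\Rea z>0\) and \(\zeta\) grows polynomially away from \(z=1\).
\end{itemize}
This growth check, together with the pole cancellation, is the main point to verify. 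Both inputs are supplied by the earlier lemmas.

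\textbf{Step 3: expand and apply Mellin--Parseval.} On \(\Rea z=2\) the series \(D_{\Sg}(z)=\sum_k\Sg(k)k^{-z}\) converges absolutely, so I would integrate term by term. The function \(k^{-z}\Phi(z)\) is the Mellin transform of \(v\mapsto\phi(kv)\). The Mellin--Parseval identity
\[
 \frac1{2\pi\ii}\int_{(c)}\Mell F(z)\Mell E(z)\dd z=\int_0^\infty F(u)E(1/u)\frac{\dd u}{u}
\]
then turns the integral into
\[
 \sum_{k\ge1}\Sg(k)\int_0^\infty W(u)^2\phi(k/u)\frac{\dd u}{u}.
\]

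\textbf{Step 4: isolate the term \(k=h\).} This repeats the support argument of Lemma~\ref{lem:separate-gap}, now without any dependence on \(x\). For \(u\in(1,\kappa_h)\) we have \(k/u>1/\kappa_h>1/4\), so the central interval of \(\supp\phi\) is never reached. The remaining cases are:
\begin{itemize}
 \item for \(k\le h-1\), \(k/u<h-1\), outside the support;
 \item for \(k\ge h+1\), \(k/u>(h+1)/\kappa_h>h+1/4\), outside the support;
 \item for \(k=h\), \(h/u\in(h-1/25,h)\), inside \((h-1/4,h+1/4)\).
\end{itemize}
In the case \(k=h\), \eqref{eq:phi-pm} gives \(\phi_\pm(h/u)=\pm\tfrac12H(h/u-h)=\pm a(u)\). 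The other two pieces, \(H(h/u)\) and \(H(h/u+h)\), vanish by \eqref{eq:H-support}. The surviving term is therefore \(\pm\Sg(h)K_{W,a}\), with no additional error. Substituting this into \eqref{eq:moment-formula} and multiplying by \(T\) gives \eqref{eq:Q-evaluated}.
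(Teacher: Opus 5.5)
Your proposal is correct and matches the paper's proof step for step. You rewrite \(C(z)G(1-z)\) as \(D_{\Sg}(z)\Phi(z)\), shift past \(z=1\) using \(\Phi(1)=0\), expand the Dirichlet series and apply Mellin inversion, use the support of \(\phi_\pm\) to isolate the term \(k=h\), and identify the diagonal through \(\phi_\pm(0)=H(0)\). The only difference is cosmetic: you shift to \(\Rea z=2\) rather than the paper's \(\Rea z=3/2\), which is harmless because both lines lie in \(1<\Rea z<3\).
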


\begin{proof}
By \eqref{eq:C-functional} and \eqref{eq:Fourier-Mellin}, the
integral term in \eqref{eq:moment-formula} is
\[
 \frac{T}{2\pi\ii}\int_{(2\eta)}
       \Mell{W^2}(z)D_{\Sg}(z)\Phi(z)\dd z.
\]
Move the line to \(\Rea z=3/2\). The zero \(\Phi(1)=0\) cancels
the only pole crossed. Polynomial growth of the zeta factors and
rapid decrease of the Mellin transforms justify the shift. On the
new line, insert the absolutely convergent Dirichlet series and
apply Mellin inversion. The result is
\begin{equation}\label{eq:singular-integral}
 T\sum_{j\ge1}\Sg(j)
       \int_0^\infty W(u)^2\phi_\pm(j/u)\frac{\dd u}{u}.
\end{equation}
For \(1<u<\kappa_h\), each \(j/u\) with \(j\ge1\) lies outside
the central support, and a side interval contains \(j/u\) only
when \(j=h\). There
\(\phi_\pm(h/u)=\pm a(u)\), so the expression is
\(\pm T\Sg(h)K_{W,a}\). The diagonal in
\eqref{eq:moment-formula} is \(D_x\), since
\(\phi_\pm(0)=H(0)\). This proves the claim.
\end{proof}

\section{Error bounds for smoothed prime-pair correlations}
\label{sec:mixed}

\subsection{The mixed zero and pole terms}

\begin{lemma}\label{lem:mixed}
Under Hypothesis~\ref{hyp:ratios}, for every fixed
\(\varepsilon>0\) and for \(g=g_+\) or \(g=g_-\),
\begin{align}
 \int_0^\infty |g(t/T)Q_x(t)Z_x(t)|\dd t
     &\ll_{W,g,\varepsilon}x^{1-\Delta(\vartheta)+\varepsilon},
       \label{eq:mixed-zero}\\
 \int_0^\infty |g(t/T)Q_x(t)R_x(t)|\dd t
     &\ll_{W,g}x^{-23/16}.
       \label{eq:mixed-pole}
\end{align}
\end{lemma}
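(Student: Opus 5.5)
The plan is to bound both integrals in Lemma~\ref{lem:mixed} using only the pointwise estimates of Lemma~\ref{lem:contour}, the small-\(t\) vanishing \eqref{eq:g-zero} of \(g\), and, for the zero term, the two density bounds \eqref{eq:ratios-density} and \eqref{eq:Ingham}. No mean-value input beyond these should be needed.

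\emph{Pole term \eqref{eq:mixed-pole}.} Since \(g\) is Schwartz with \(g(v)=O(v^2)\) near \(0\), we have \(|g(t/T)|\ll_g (t/T)^2\) for all \(t\ge0\). Insert \eqref{eq:Q-bound} and \eqref{eq:R-bound} with some \(M\ge4\). This gives
\[
\int_0^\infty |g(t/T)Q_xR_x|\dd t\ll T^{-2}x^{1/16}x^{1/2}\int_0^\infty t^2\log^2(2+t)(1+t)^{-M}\dd t\ll x^{9/16-2}=x^{-23/16}.
\]

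\emph{Zero term \eqref{eq:mixed-zero}.} Insert \eqref{eq:Z-bound} and \eqref{eq:Q-bound} and interchange sum and integral, which is legitimate by absolute convergence. For each zero \(\rho=\beta+\ii\gamma\) with \(\beta>\sigma_0\) this gives the contribution
\[
x^{\beta-1/2}x^{1/16}\int_0^\infty |g(t/T)|\log^2(2+t)(1+|t-\gamma|)^{-M}\dd t .
\]
For \(|\gamma|\le T^{1+\varepsilon}\) this is \(\ll x^{\beta-1/2+1/16+\varepsilon}\), using only boundedness of \(g\). For \(|\gamma|>T^{1+\varepsilon}\), the rapid decay of \(g\) (when \(t\) is near \(\gamma\)) or of \((1+|t-\gamma|)^{-M}\) (otherwise) makes the contribution \(\ll x^{\beta-1/2}|\gamma|^{-2}\). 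Summed over zeros with the standard zero count, these large ordinates are negligible. It therefore remains to bound
\[
S=\sum_{\beta>\sigma_0,\ 0<\gamma\le T^{1+\varepsilon}}x^{\beta-1/2};
\]
negative ordinates are handled by conjugation.

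\emph{The main step: combining the density estimates.} Write \(S\) as a Stieltjes integral in \(\sigma\) and integrate by parts:
\[
S\ll x^{\sigma_0-1/2}N_{>}(\sigma_0,T')+\log x\int_{\sigma_0}^1 x^{\sigma-1/2}N_{>}(\sigma,T')\dd\sigma,
\qquad T'=T^{1+\varepsilon}.
\]
Bound \(N_{>}(\sigma,T')\) by the minimum of \(T'^{\vartheta+\varepsilon}\) from \eqref{eq:ratios-density}, which is available since \(\sigma\ge\sigma_0\), and \(T'^{3(1-\sigma)/(2-\sigma)}\log^5T'\) from \eqref{eq:Ingham}. Since \(T\asymp x\), the exponent of \(x\) to be maximised is
\[
e(\sigma)=\sigma-\tfrac12+\min\bigl(\vartheta,\ 3(1-\sigma)/(2-\sigma)\bigr).
\]
The first branch increases in \(\sigma\). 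The second branch decreases, because \(3/(2-\sigma)^2>1\) on \((1/2,1)\). Hence the maximum occurs where \(3(1-\sigma)/(2-\sigma)=\vartheta\), that is, at \(\sigma=1-\vartheta/(3-\vartheta)\). This point lies in \([\sigma_0,1)\) because \(\vartheta<\vartheta_*\) gives \(\vartheta/(3-\vartheta)<7/16\). There
\[
e=\tfrac12+\vartheta-\frac{\vartheta}{3-\vartheta}.
\]
Multiplying by the factor \(x^{1/16}\) from \(Q_x\) gives the exponent \(9/16+\vartheta-\vartheta/(3-\vartheta)=1-\Delta(\vartheta)\), up to \(x^{O(\varepsilon)}\) and logarithms. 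Renaming \(\varepsilon\) proves \eqref{eq:mixed-zero}.

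I expect this optimisation to be the only delicate point. One must check that the crossover lies in the admissible \(\sigma\)-range and that the \(\varepsilon\)-losses from \(T'=T^{1+\varepsilon}\) and from \eqref{eq:ratios-density} stay harmless. The rest is routine bookkeeping.
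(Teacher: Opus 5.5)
Your proposal is correct and follows essentially the same route as the paper. The pole-term computation is identical, and for the zero term you use the same pointwise bounds from Lemma~\ref{lem:contour} and the same two density estimates, maximised at the same crossover \(b_\vartheta=1-\vartheta/(3-\vartheta)\). The differences are cosmetic: you truncate at \(|\gamma|\le T^{1+\varepsilon}\) and integrate by parts in \(\sigma\), whereas the paper uses the damping factor \((1+|\gamma|/T)^{-A}\) over dyadic ranges and a finite partition of \((\sigma_0,1)\). Your integral form needs the density bounds uniformly in \(\sigma\); Ingham's estimate provides this, and in any case monotonicity of \(N_{>}(\sigma,T')\) in \(\sigma\) reduces it to the paper's finitely many fixed-\(b\) bounds.
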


\begin{proof}
Fix a large \(A>0\) and take \(M>A+3\). Schwartz decay gives,
uniformly in \(\gamma\in\R\) and \(T\ge1\),
\begin{align}
 &\int_0^\infty |g(t/T)|\log^2(2+t)
                        (1+|t-\gamma|)^{-M}\dd t\notag\\
 &\hspace{12mm}\ll_{g,A,M}
   (1+|\gamma|/T)^{-A}\log^2(2+T+|\gamma|).
   \label{eq:convolution-bound}
\end{align}
Indeed,
\(1+|\gamma|/T\le(1+t/T)(1+|t-\gamma|)\), while the logarithm
of \(2+t\) is bounded by the sum of the logarithms of
\(2+|\gamma|\) and \(2+|t-\gamma|\). The remaining power of
\(1+|t-\gamma|\) is integrable. This also covers negative and
bounded ordinates.

Combine \eqref{eq:Q-bound}, \eqref{eq:Z-bound} and
\eqref{eq:convolution-bound}. For \(\sigma_0\le b\le1\), put
\[
 d(b)=\min\left\{\vartheta,\frac{3(1-b)}{2-b}\right\},
 \qquad b_\vartheta=\frac{3-2\vartheta}{3-\vartheta}.
\]
Our parameter range gives \(\sigma_0<b_\vartheta<1\).
The zero-count estimates \eqref{eq:ratios-density} and
\eqref{eq:Ingham} imply, for each fixed \(\sigma_0\le b<1\),
\(\varepsilon'>0\) and \(V\ge2\),
\[
 N_{>}(b,V)\ll_{b,\varepsilon'}V^{d(b)+\varepsilon'}.
\]
The function \(b+d(b)\) increases up to \(b_\vartheta\) and
decreases thereafter: on the second interval its derivative is
\(1-3/(2-b)^2<0\). Thus
\[
 \max_{\sigma_0\le b\le1}\bigl(b+d(b)\bigr)
 =b_\vartheta+\vartheta.
\]
Partition \((\sigma_0,1)\) into finitely many intervals
\((b,b+\delta]\), truncating the last interval at \(1\), with
\(\delta>0\) fixed and sufficiently small in terms of \(\varepsilon\).
In such an interval, \(x^{\beta-1/2}\le x^{b+\delta-1/2}\).
For \(|\gamma|\le T\), conjugation and these estimates bound its
contribution to the mixed term by
\[
 \ll_{W,g,b,\varepsilon'}
 x^{\eta+b+\delta-1/2}T^{d(b)+\varepsilon'}\log^2(2+T).
\]
For \(2^jT<|\gamma|\le2^{j+1}T\), the same estimate has
the additional factor
\(2^{j(d(b)+\varepsilon'-A)}\) and a logarithmic factor.
Choosing \(A\) large makes these dyadic sums converge.
There are only finitely many partition endpoints, so choosing
\(\delta,\varepsilon'\) small and absorbing logarithms gives
\[
 \int_0^\infty |g(t/T)Q_x(t)Z_x(t)|\dd t
 \ll_{W,g,\varepsilon}
 x^{\eta-1/2+b_\vartheta+\vartheta+\varepsilon}
 =x^{1-\Delta(\vartheta)+\varepsilon}.
\]
This proves \eqref{eq:mixed-zero}.

Finally \(|g(v)|\ll_g v^2\) for all real \(v\), by
\eqref{eq:g-zero} and boundedness. Equations~\eqref{eq:Q-bound}
and~\eqref{eq:R-bound} therefore give
\[
 \int_0^\infty |g(t/T)Q_x(t)R_x(t)|\dd t
 \ll_{W,g}T^{-2}x^{\eta+1/2}
 \int_0^\infty t^2\log^2(2+t)(1+t)^{-M}\dd t
 \ll_{W,g}x^{-23/16},
\]
on taking \(M>4\).
\end{proof}

\subsection{The correlation estimate from the ratios formula}

\begin{proposition}\label{prop:localized}
Under Hypothesis~\ref{hyp:ratios}, for every real
\(V\in C_c^\infty((1,\kappa_h))\) and every fixed
\(0<\varepsilon<\Delta(\vartheta)\),
\begin{equation}\label{eq:localized-goal}
 \sum_{n\ge1}\frac{\Lambda(n)\Lambda(n+h)}{n}V(n/x)^2
 =\Sg(h)\int_0^\infty V(u)^2\frac{\dd u}{u}
     +O_{h,V,\varepsilon}(x^{-\Delta(\vartheta)+\varepsilon}).
\end{equation}
\end{proposition}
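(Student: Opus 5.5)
We take \(W=V/\sqrt{a}\), with \(a\) from \eqref{eq:a-weight}. Since \(\supp V\) is a compact subset of \((1,\kappa_h)\) and \(a\) is smooth and strictly positive on \((1,\kappa_h)\), this \(W\) is real and lies in \(C_c^\infty((1,\kappa_h))\). Moreover \(W(u)^2a(u)=V(u)^2\), so \(K_{W,a}=\int_0^\infty V(u)^2\,\dd u/u\). The plan has three steps. First, trap \(TJ_W(x)\) between two inequalities coming from the two weights \(g_\pm\). Second, divide by \(T=2\pi x\). Third, compare \(J_W(x)\) with the left side of \eqref{eq:localized-goal}.

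For the first step, write \(P_x=Q_x+(R_x-Z_x)\) using Lemma~\ref{lem:contour}. Then, pointwise,
\[
 |P_x|^2=|Q_x|^2+2\Rea\bigl(Q_x\overline{(R_x-Z_x)}\bigr)+|R_x-Z_x|^2
 \ge |Q_x|^2-2|Q_xR_x|-2|Q_xZ_x|.
\]
Multiply by \(g_\pm(t/T)\ge0\) and integrate over \(t\ge0\). Lemma~\ref{lem:separate-gap} evaluates the left side, Proposition~\ref{prop:Q-evaluation} evaluates the \(|Q_x|^2\) term, and Lemma~\ref{lem:mixed} bounds the two mixed terms. This gives
\[
 D_x\pm TJ_W(x)\ \ge\ D_x\pm T\Sg(h)K_{W,a}
 -O\bigl(x^{\vartheta+1/8+\varepsilon}+x^{1-\Delta(\vartheta)+\varepsilon}+x^{-23/16}\bigr).
\]
The diagonal terms \(D_x\) cancel exactly, so no estimate for them is needed.

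For the second step, the \(+\) sign gives a lower bound for \(J_W\) and the \(-\) sign gives an upper bound. Dividing by \(T\asymp x\), we get
\[
 J_W(x)=\Sg(h)K_{W,a}+O\bigl(x^{\vartheta-7/8+\varepsilon}+x^{-\Delta(\vartheta)+\varepsilon}\bigr).
\]
Because \(\Delta(\vartheta)<7/8-\vartheta\) in our range, the second error term dominates, and it is \(x^{-\Delta(\vartheta)+\varepsilon}\).

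For the third step, the surviving \(n\) satisfy \(x<n<\kappa_hx\). In that range we have the following approximations, each with relative or absolute error \(O(1/x)\):
\begin{itemize}
 \item \(x\log(1+h/n)-h=hx/n-h+O(1/x)\), so \(\tfrac12H(x\log(1+h/n)-h)=a(n/x)+O(1/x)\), because \(H\) is smooth;
 \item \(W((n+h)/x)=W(n/x)+O(1/x)\);
 \item \((n(n+h))^{-1/2}=n^{-1}(1+O(1/x))\).
\end{itemize}
The trivial bound \(\sum_{x<n<\kappa_hx}\Lambda(n)\Lambda(n+h)/n\ll\log^2x\) then gives
\[
 J_W(x)=\sum_n\frac{\Lambda(n)\Lambda(n+h)}{n}W(n/x)^2a(n/x)+O(x^{-1}\log^2x).
\]
The sum on the right is exactly the left side of \eqref{eq:localized-goal}, since \(W^2a=V^2\). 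The \(O(x^{-1}\log^2x)\) term is absorbed into \(x^{-\Delta(\vartheta)+\varepsilon}\).

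The step I expect to need the most care is the sign argument in the first step. Discarding the nonnegative term \(|R_x-Z_x|^2\) only produces one-sided bounds, and the method works because the two weights \(g_\pm\) have the same diagonal term and opposite coefficients \(\pm\) at difference \(h\). The points to check are that \(g_\pm\ge0\), which holds by construction, and that the evaluation \(I_\pm=D_x\pm TJ_W\) in Lemma~\ref{lem:separate-gap} holds for this particular choice of \(W\), which it does for all large \(x\). The rest is bookkeeping of exponents already supplied by Lemma~\ref{lem:mixed} and Proposition~\ref{prop:Q-evaluation}.
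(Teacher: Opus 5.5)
Your proposal is correct and follows the paper's proof essentially step for step. It uses the same pointwise lower bound obtained by dropping \(g_\pm|R_x-Z_x|^2\), the same combination of Lemma~\ref{lem:separate-gap}, Proposition~\ref{prop:Q-evaluation} and Lemma~\ref{lem:mixed} with the two signs giving opposite bounds for \(J_W\), the same \(O(x^{-1}\log^2x)\) replacement of the shifted coefficient, and the same substitution \(W=V/\sqrt a\), which you simply make at the start rather than at the end.
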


\begin{proof}
Since \(g_\pm\ge0\), the identity
\(P_x=Q_x+(R_x-Z_x)\) implies
\begin{align}
 I_\pm(x)
 &\ge\int_0^\infty g_\pm(t/T)|Q_x(t)|^2\dd t\notag\\
 &\quad-2\int_0^\infty g_\pm(t/T)|Q_x(t)|
                               (|R_x(t)|+|Z_x(t)|)\dd t.
 \label{eq:positive-inequality}
\end{align}
The inequality follows by dropping the nonnegative term
\(g_\pm|R_x-Z_x|^2\). Lemma~\ref{lem:mixed},
Proposition~\ref{prop:Q-evaluation} and
\eqref{eq:moment-arithmetic} give
\[
 D_x\pm TJ_W(x)
 \ge D_x\pm T\Sg(h)K_{W,a}
                 -O_{h,W,\varepsilon}(x^{1-\Delta(\vartheta)+\varepsilon}).
\]
Here \(\vartheta+1/8<1-\Delta(\vartheta)\), so the ratios
error is included in the displayed remainder.
After subtracting \(D_x\) and dividing by \(T\), the plus sign
gives the lower bound for \(J_W\), and the minus sign gives the
upper bound. Thus
\begin{equation}\label{eq:J-asymptotic}
 J_W(x)=\Sg(h)K_{W,a}+O_{h,W,\varepsilon}(x^{-\Delta(\vartheta)+\varepsilon}).
\end{equation}

For \(n\) in the support of \(W(n/x)\), smoothness and the
expansion of \(\log(1+h/n)\) give
\[
 \frac{W(n/x)W((n+h)/x)}{2\sqrt{n(n+h)}}
 H\bigl(x\log(1+h/n)-h\bigr)
 =\frac{W(n/x)^2a(n/x)}{n}+O_{h,W}(x^{-2}).
\]
There are \(O(x)\) relevant terms and
\(\Lambda(n)\Lambda(n+h)\ll\log^2x\), so their total replacement
error in \(J_W\) is \(O_{h,W}(x^{-1}\log^2x)\). Therefore
\[
 \sum_n\frac{\Lambda(n)\Lambda(n+h)}nW(n/x)^2a(n/x)
 =\Sg(h)\int_0^\infty W(u)^2a(u)\frac{\dd u}{u}
       +O_{h,W,\varepsilon}(x^{-\Delta(\vartheta)+\varepsilon}).
\]
Given \(V\), choose \(W(u)=V(u)/\sqrt{a(u)}\). By
\eqref{eq:a-weight}, this is a real member of
\(C_c^\infty((1,\kappa_h))\). Substitution proves the proposition.
\end{proof}

\subsection{The sharper correlation error under RH}

\begin{proposition}\label{prop:localized-RH}
Assume Hypothesis~\ref{hyp:ratios} and RH. For every real
\(V\in C_c^\infty((1,\kappa_h))\) and each fixed
\(0<\varepsilon<7/8-\vartheta\),
\begin{equation}\label{eq:localized-RH}
 \sum_{n\ge1}\frac{\Lambda(n)\Lambda(n+h)}n V(n/x)^2
 =\Sg(h)\int_0^\infty V(u)^2\frac{\dd u}{u}
       +O_{h,V,\varepsilon}(x^{\vartheta-7/8+\varepsilon}).
\end{equation}
\end{proposition}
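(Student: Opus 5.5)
We follow the proof of Proposition~\ref{prop:localized} step for step. The only change is that under RH the sum \eqref{eq:Z} is empty, because no zero has \(\beta>\sigma_0\). Lemma~\ref{lem:contour} then reduces to the exact identity \(P_x=Q_x+R_x\). Hypothesis~\ref{hyp:ratios} remains in force, so Lemma~\ref{lem:zeros}, \eqref{eq:logderivative-bound}, Lemma~\ref{lem:logderivative} and Propositions~\ref{prop:moment-formula} and~\ref{prop:Q-evaluation} all apply unchanged. RH also already gives the zero-free strip and the bound on \(\zeta'/\zeta\) on the line \(\Rea s=\sigma_0\) directly.

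First, since \(g_\pm\ge0\), we expand \(|Q_x+R_x|^2\) and drop the nonnegative term \(g_\pm|R_x|^2\). This gives
\[
 I_\pm(x)\ge\int_0^\infty g_\pm(t/T)|Q_x(t)|^2\dd t
 -2\int_0^\infty g_\pm(t/T)|Q_x(t)R_x(t)|\dd t .
\]
By \eqref{eq:mixed-pole} the mixed pole term is \(O_{W}(x^{-23/16})\), which is negligible. Proposition~\ref{prop:Q-evaluation} together with \eqref{eq:moment-arithmetic} then yields
\[
 D_x\pm TJ_W(x)\ge D_x\pm T\Sg(h)K_{W,a}-O_{h,W,\varepsilon}(x^{\vartheta+1/8+\varepsilon}).
\]
We cancel \(D_x\) and divide by \(T=2\pi x\). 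The plus sign gives a lower bound for \(J_W\) and the minus sign an upper bound, so
\[
 J_W(x)=\Sg(h)K_{W,a}+O_{h,W,\varepsilon}(x^{\vartheta-7/8+\varepsilon}).
\]
The exponent is negative because \(\varepsilon<7/8-\vartheta\).

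Next we pass from \(J_W\) to the smooth sum exactly as before. The Taylor replacement of \(\tfrac12 W(n/x)W((n+h)/x)H(x\log(1+h/n)-h)/\sqrt{n(n+h)}\) by \(W(n/x)^2a(n/x)/n\) costs \(O_{h,W}(x^{-1}\log^2x)\) in total. This is smaller than \(x^{\vartheta-7/8}\), since \(\vartheta\ge1/2\) gives \(\vartheta-7/8\ge-3/8>-1\). Finally we choose \(W=V/\sqrt a\), which is real, smooth and supported in \((1,\kappa_h)\) because \(a>0\) there. This turns the asymptotic into \eqref{eq:localized-RH}.

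There is no genuine obstacle here. The only point needing care is to check that every error term other than the ratios error \(x^{\vartheta+1/8+\varepsilon}\) is of smaller order after division by \(T\). These are the Mellin-tail term \eqref{eq:Mellin-tail}, the small-time term \eqref{eq:small-time}, the pole term \eqref{eq:mixed-pole} and the Taylor replacement. All of them are at most \(O(x^{-3/8}\log^4x)\) relative to \(T\), and \(x^{-3/8}\le x^{\vartheta-7/8}\) holds for \(\vartheta\ge1/2\). So the dominant error is \(x^{\vartheta+1/8+\varepsilon}/T\asymp x^{\vartheta-7/8+\varepsilon}\).
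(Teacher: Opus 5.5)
Your proposal is correct and follows the paper's proof essentially step for step. With \(Z_x=0\) under RH, you drop \(g_\pm|R_x|^2\), combine \eqref{eq:mixed-pole} with Proposition~\ref{prop:Q-evaluation}, divide by \(T=2\pi x\) to get both bounds on \(J_W\), and finish with the \(O_{h,W}(x^{-1}\log^2x)\) replacement and the substitution \(W=V/\sqrt a\); your closing check of the Mellin-tail and small-time terms is redundant, since they are already absorbed into the \(x^{\vartheta+1/8+\varepsilon}\) error of Proposition~\ref{prop:Q-evaluation}, but it does no harm.
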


\begin{proof}
Under RH every nontrivial zero has real part \(1/2<\sigma_0\),
so \(Z_x=0\) in \eqref{eq:shifted-Mellin}. For each sign,
\eqref{eq:positive-inequality}, \eqref{eq:mixed-pole} and
Proposition~\ref{prop:Q-evaluation} therefore give
\[
 D_x\pm TJ_W(x)
 \ge D_x\pm T\Sg(h)K_{W,a}
       -O_{h,W,\varepsilon}(x^{\vartheta+1/8+\varepsilon})-O_{h,W}(x^{-23/16}).
\]
Subtract \(D_x\) and divide by \(T=2\pi x\). The two signs give
\[
 J_W(x)=\Sg(h)K_{W,a}
           +O_{h,W,\varepsilon}(x^{\vartheta-7/8+\varepsilon}).
\]
The replacement of the shifted coefficient in the proof of
Proposition~\ref{prop:localized} costs \(O_{h,W}(x^{-1}\log^2x)\),
which is smaller than this error. Substituting
\(W=V/\sqrt a\), as in that proof, gives \eqref{eq:localized-RH}.
\end{proof}

\section{Removal of prime powers and smoothing}
\label{sec:unsmoothing}

\begin{proof}[Proof of Theorem~\ref{thm:smoothed}]
Propositions~\ref{prop:localized} and~\ref{prop:localized-RH} prove
the two estimates for the von Mangoldt sum. There are
\(O(\sqrt x\log x)\) proper prime powers in \([x/2,2x+h]\).
Indeed, for each exponent \(j\ge2\) there are at most
\((2x+h)^{1/j}\) possible bases, and only \(O(\log x)\) exponents
occur. Terms in which either \(n\) or \(n+h\) is such a power
contribute at most \(O_{h,W}(x^{-1/2}\log^3x)\) to the von Mangoldt sum.
This is smaller than each of the errors in
Theorem~\ref{thm:smoothed}. Removing those terms proves both
estimates for \eqref{eq:smoothed-prime-sum}.
\end{proof}

In particular, Hypothesis~\ref{hyp:ratios} gives
\begin{equation}\label{eq:prime-localized}
 \sum_{\substack{p\\p+h\ {\rm prime}}}
 \frac{\log p\log(p+h)}{p}W(p/x)^2
 =\Sg(h)\int_0^\infty W(u)^2\frac{\dd u}{u}+o_{h,W}(1).
\end{equation}

\begin{lemma}\label{lem:unsmooth}
Let \(c_n\ge0\) for \(n\ge2\) and \(S\ge0\).
Suppose that for every real \(W\in C_c^\infty((1,\kappa_h))\),
\begin{equation}\label{eq:abstract-smooth}
 \sum_{n\ge2}\frac{c_n}{n}W(n/x)^2
 =S\int_0^\infty W(u)^2\frac{\dd u}{u}+o_{h,W}(1).
\end{equation}
Then \(\sum_{n\le y}c_n=Sy+o(y)\).
\end{lemma}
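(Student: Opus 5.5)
The plan is a standard Tauberian sandwich argument in the multiplicative variable, using nonnegativity of \(c_n\). Write \(F(y)=\sum_{n\le y}c_n\). First I localize: the hypothesis only sees windows \((x,\kappa_h x)\), so I aim to show that for every fixed \(1<\alpha<\beta<\kappa_h\),
\[
 \sum_{\alpha x<n\le\beta x}\frac{c_n}{n}=S\log(\beta/\alpha)+o(1).
\]
For the upper bound, given \(\epsilon>0\), I choose a real smooth \(W\) supported in \((1,\kappa_h)\) with \(0\le W\le1\) and \(W=1\) on \([\alpha,\beta]\), with the transition intervals short enough that \(\int W^2\,\dd u/u\le\log(\beta/\alpha)+\epsilon\). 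Since \(c_n\ge0\), the windowed sum is at most \(\sum c_nW(n/x)^2/n\), which by \eqref{eq:abstract-smooth} equals \(S\int W^2\,\dd u/u+o(1)\). For the lower bound I take \(W\) supported in \([\alpha,\beta]\) with \(W=1\) on a slightly smaller interval, so that \(\int W^2\,\dd u/u\ge\log(\beta/\alpha)-\epsilon\). Letting \(\epsilon\to0\) gives the display. The only constraint is that \(\alpha,\beta\) stay strictly inside \((1,\kappa_h)\), which is harmless.

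Next I remove the weight \(1/n\) and pass to short intervals. Subdividing \((\alpha x,\beta x]\) into \(N\) subintervals of equal logarithmic length, on each piece \(n\) varies by a factor \(1+O(1/N)\). Applying the previous step to each piece and again using \(c_n\ge0\) gives
\[
 F(\beta x)-F(\alpha x)=S(\beta-\alpha)x+O(x/N)+o_N(x).
\]
Letting \(N\to\infty\) slowly yields \(F(\beta x)-F(\alpha x)=S(\beta-\alpha)x+o(x)\) for fixed \(\alpha,\beta\). Equivalently, fix \(r=\beta/\alpha\in(1,\kappa_h)\), say \(\alpha=(1+\kappa_h)/2-\) small and \(\beta=\alpha r\); then, with \(y=\alpha x\), we get \(F(ry)-F(y)=S(r-1)y+o(y)\) as \(y\to\infty\).

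Finally I sum a geometric chain. Given \(y\) large and a small \(\epsilon>0\), let \(y_k=y r^{-k}\) for \(0\le k\le K\), where \(K\) is chosen with \(r^{-K}\le\epsilon\). Telescoping gives
\[
 F(y)-F(y_K)=\sum_{k}S(r-1)y_{k+1}+o(y)=S(y-y_K)+o(y),
\]
since \(K\) is fixed and \(y_K\to\infty\). The tail satisfies \(0\le F(y_K)\le F(\epsilon y)\). To control it I use the same chain crudely: the increments over ratios \(r\) are \(\le C y'\), and summing over all dyadic-type blocks below \(\epsilon y\) gives \(F(\epsilon y)\ll\epsilon y+O(1)\). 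Hence \(F(y)=Sy+O(\epsilon y)+o(y)\), and letting \(\epsilon\to0\) proves the claim.

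The main obstacle is this last tail bound. The \(o(\cdot)\) in each block is not uniform over infinitely many blocks, so I would first derive an a priori bound \(F(ry')-F(y')\le C y'\) for all \(y'\ge y_0\), with an absolute constant \(C\) (say \(C=S(r-1)+1\)), valid once \(y'\) exceeds a threshold. I would then sum geometrically from \(y_0\) to \(\epsilon y\), which gives \(F(\epsilon y)\le F(y_0)+C'\epsilon y\).
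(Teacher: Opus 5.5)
Your argument is correct, but it takes a different route from the paper.

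\textbf{The paper's route.} The paper does not localize to windows. It fixes \(1<\lambda<\kappa_h\) and a single \(W\in C_c^\infty((1,\lambda))\), and integrates the hypothesis over \(x\in[1,y]\). By Tonelli and the substitution \(u=n/x\), the weight \(c_nW(n/x)^2/n\) integrates to \(c_n\int_{n/y}^{n}W(u)^2\,\dd u/u^2\). This integral has three regimes:
\begin{itemize}
\item it equals the constant \(M_W=\int W(u)^2\,\dd u/u^2\) for every \(n\le y\), because \(n\ge2>\lambda\);
\item it lies in \([0,M_W]\) for \(y<n<\lambda y\);
\item it vanishes for \(n\ge\lambda y\).
\end{itemize}
Write \(A(y)=\sum_{n\le y}c_n\), \(K_W=\int W(u)^2\,\dd u/u\), and \(J_W(y)=SK_Wy+o(y)\) for the integrated smooth sum. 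Positivity then gives the sandwich \(J_W(y/\lambda)/M_W\le A(y)\le J_W(y)/M_W\). The elementary inequality \(1<K_W/M_W<\lambda\) yields \(S/\lambda\le\liminf A(y)/y\le\limsup A(y)/y\le S\lambda\), and one lets \(\lambda\downarrow1\).

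\textbf{What each approach buys.} The single integration in \(x\) does three jobs at once: it removes the factor \(1/n\), sums over all scales, and absorbs the small-\(n\) contribution. So the paper needs no bump-function sandwiching of indicators, no Riemann sum, and no separate tail estimate. Its only analytic point is that integrating a locally bounded \(o(1)\) error over \([1,y]\) gives \(o(y)\).

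Your approach is more hands-on. It passes through the stronger intermediate statement \(F(\beta x)-F(\alpha x)=S(\beta-\alpha)x+o(x)\) for every fixed window \(1<\alpha<\beta<\kappa_h\), which the paper's argument does not produce directly. The cost is three limiting parameters (the smoothing \(\epsilon\), the subdivision \(N\), and the tail \(\epsilon\)). You also have to handle the non-uniformity of the \(o(\cdot)\) terms along the geometric chain. You correctly isolated that issue and resolved it with the a priori bound \(F(ry')-F(y')\le Cy'\) for \(y'\ge y_0\), where \(C\) depends on \(S\) and \(r\) but not on \(\epsilon\). That bound is exactly what makes the tail estimate \(F(\epsilon y)\le F(ry_0)+C\epsilon y/(r-1)\) valid.
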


\begin{proof}
Fix \(1<\lambda<\kappa_h\) and a nonzero real
\(W\in C_c^\infty((1,\lambda))\). Put
\[
 K_W=\int_0^\infty W(u)^2\frac{\dd u}{u},\qquad
 M_W=\int_0^\infty W(u)^2\frac{\dd u}{u^2},\qquad
 C_W(x)=\sum_{n\ge2}\frac{c_n}{n}W(n/x)^2.
\]
Both constants are positive. Integrating \eqref{eq:abstract-smooth}
with respect to \(x\), for this fixed \(W\), gives
\[
 J_W(y):=\int_1^y C_W(x)\dd x=SK_Wy+o_{h,W}(y).
\]
Tonelli's theorem and the substitution \(u=n/x\) give
\[
 J_W(y)=\sum_{n\ge2}c_n\int_{n/y}^{n}
                 W(u)^2\frac{\dd u}{u^2}.
\]
As \(n\ge2>\lambda\), this integral equals \(M_W\) for \(n\le y\),
lies between \(0\) and \(M_W\) for \(y<n<\lambda y\), and vanishes
for \(n\ge\lambda y\). If \(A(y)=\sum_{n\le y}c_n\), it follows that
\[
 \frac{J_W(y/\lambda)}{M_W}\le A(y)
       \le\frac{J_W(y)}{M_W}
\]
for sufficiently large \(y\).
Moreover, \(1<K_W/M_W<\lambda\). Hence
\[
 \frac{S}{\lambda}\le\liminf_{y\to\infty}\frac{A(y)}{y}
 \le\limsup_{y\to\infty}\frac{A(y)}{y}\le S\lambda.
\]
The limit in \(y\) is taken with a fixed test function for each
\(\lambda\); letting \(\lambda\downarrow1\) then proves the claim.
\end{proof}

\begin{proof}[Proof of Theorem~\ref{thm:main}]
Apply Lemma~\ref{lem:unsmooth} first to
\(c_n=\Lambda(n)\Lambda(n+h)\), using
Proposition~\ref{prop:localized}. This yields
\begin{equation}\label{eq:Lambda-HL}
 \sum_{n\le y}\Lambda(n)\Lambda(n+h)=\Sg(h)y+o(y).
\end{equation}
Apply the same lemma to
\[
 c_n=
 \begin{cases}
 \log n\log(n+h),& n,\ n+h\ \text{prime},\\
 0,&\text{otherwise},
 \end{cases}
\]
using \eqref{eq:prime-localized}. This proves the assertion for \(A_h\).

To deduce the assertion for \(\pi_h\), put
\(f(t)=(\log t\log(t+h))^{-1}\) for \(t\ge2\).
Partial summation gives
\[
 \pi_h(x)=f(x)A_h(x)-\int_2^x A_h(t)f'(t)\dd t.
\]
Here \(f(x)\sim(\log x)^{-2}\),
\(|f'(t)|\ll(t(\log t)^3)^{-1}\), and \(A_h(t)\ll t\).
The integral is \(O(x/(\log x)^3)\), so
\(\pi_h(x)\sim\Sg(h)x/(\log x)^2\).
\end{proof}

\begin{proof}[Proof of Corollary~\ref{cor:series}]
Let
\[
 \widetilde A_h(t)=
 \sum_{\substack{p+h\le t\\p,\ p+h\ {\rm prime}}}\log p\log(p+h).
\]
Theorem~\ref{thm:main} gives
\(\widetilde A_h(t)=A_h(t-h)=\Sg(h)t+o(t)\), with \(A_h(u)=0\)
when \(u<2\). For \(x\ge2\), interchange the finite sum and integral:
\[
 \int_2^x\frac{\widetilde A_h(t)}{t^2}\dd t
 =\sum_{\substack{p+h\le x\\p,\ p+h\ {\rm prime}}}
       \log p\log(p+h)\int_{p+h}^x\frac{\dd t}{t^2}
 =B_h(x).
\]
Integrating the asymptotic for \(\widetilde A_h(t)\) gives
\eqref{eq:series-main}, and its positive leading coefficient gives
the asserted lower bound for all \(x\ge x_c\).
\end{proof}

\section{Von Mangoldt coefficients in Nyman--Beurling approximants}
\label{sec:NB}

Let \(N\ge2\) be an integer, \(L=\log N\), and
\begin{equation}\label{eq:VN}
 a_N(d)=\mu(d)\left(1-\frac{\log d}{L}\right),\qquad
 V_N(s)=\sum_{d\le N}\frac{a_N(d)}{d^s},\qquad
 \beta_N=LV_N(1).
\end{equation}
This polynomial is the one used by Bettin, Conrey and Farmer in their
conditional estimate for the Nyman--Beurling distance
\cite[p.~S31, definition preceding Theorem~1]{BettinConreyFarmer2013}.
The identities below hold for every fixed \(N\ge2\).
For \(t>0\), set
\begin{equation}\label{eq:residual-def}
 r_N(t)=\one_{(0,1]}(t)+
       \sum_{d\le N}a_N(d)\left\{\frac{1}{dt}\right\},
 \qquad q_N(u)=r_N(1/u)\quad(u>0).
\end{equation}
Here \(\{v\}=v-\lfloor v\rfloor\). The Mellin transform is
\begin{equation}\label{eq:residual-mellin}
 \int_0^\infty r_N(t)t^{s-1}\dd t
 =\frac{1-\zeta(s)V_N(s)}{s},
 \qquad 0<\Rea s<1.
\end{equation}
Indeed, substitution reduces each fractional-part term to
\(\int_0^\infty\{v\}v^{-s-1}\dd v=-\zeta(s)/s\), which follows
by splitting the integral at the integers.
Mellin Plancherel consequently identifies \(\|r_N\|_{L^2(0,\infty)}^2\)
with
\[
 \frac{1}{2\pi}\int_{\R}
 \left|1-\zeta(1/2+\ii t)V_N(1/2+\ii t)\right|^2
 \frac{\dd t}{1/4+t^2}.
\]

\begin{proposition}\label{prop:NB}
Write \(\psi(u)=\sum_{n\le u}\Lambda(n)\). Then
\begin{equation}\label{eq:NB-prefix}
 q_N(u)=\frac{\beta_Nu-\psi(u)}{L}\qquad(0<u\le N).
\end{equation}
In particular, if \(n\ge1\) is an integer and \(n+1/2\le N\), then
\begin{equation}\label{eq:NB-jump}
 \Lambda(n)=\beta_N-
 L\bigl(q_N(n+1/2)-q_N(n-1/2)\bigr).
\end{equation}
\end{proposition}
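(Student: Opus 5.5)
The plan is to expand the definition of \(q_N\) directly and evaluate the resulting divisor sum with the identity \(\sum_{d\mid n}\mu(d)\log d=-\Lambda(n)\). By \eqref{eq:residual-def}, for \(u>0\),
\[
 q_N(u)=r_N(1/u)=\one_{[1,\infty)}(u)+\sum_{d\le N}a_N(d)\left\{\frac ud\right\}.
\]
First I write \(\{u/d\}=u/d-\lfloor u/d\rfloor\). The linear part sums to \(u\sum_{d\le N}a_N(d)/d=uV_N(1)=\beta_Nu/L\), by the definition of \(\beta_N\) in \eqref{eq:VN}. It then remains to evaluate
\[
 \sum_{d\le N}a_N(d)\left\lfloor\frac ud\right\rfloor .
\]

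Next, since \(\lfloor u/d\rfloor=\#\{m\ge1:md\le u\}\), I interchange the sums to get
\[
 \sum_{d\le N}a_N(d)\left\lfloor\frac ud\right\rfloor
 =\sum_{n\le u}\ \sum_{\substack{d\mid n\\ d\le N}}a_N(d).
\]
This is the one place where the hypothesis \(u\le N\) is needed, and it is the only real point to check. Every \(n\le u\) satisfies \(n\le N\), so each divisor of \(n\) satisfies \(d\le N\), and the truncation \(d\le N\) can be dropped. Then
\[
 \sum_{d\mid n}\mu(d)\Bigl(1-\frac{\log d}{L}\Bigr)
 =[n=1]-\frac1L\sum_{d\mid n}\mu(d)\log d
 =[n=1]+\frac{\Lambda(n)}{L},
\]
using \(\sum_{d\mid n}\mu(d)=[n=1]\) and \(\sum_{d\mid n}\mu(d)\log d=-\Lambda(n)\), which is M\"obius inversion of \(\log n=\sum_{d\mid n}\Lambda(d)\). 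Summing over \(n\le u\) gives
\[
 \sum_{d\le N}a_N(d)\left\lfloor\frac ud\right\rfloor=\one_{[1,\infty)}(u)+\frac{\psi(u)}L .
\]

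Substituting this into the expansion of \(q_N(u)\), the indicator terms cancel and
\[
 q_N(u)=\frac{\beta_Nu-\psi(u)}{L},
\]
which is \eqref{eq:NB-prefix}. For \eqref{eq:NB-jump}, both \(n\pm1/2\) lie in \((0,N]\) when \(n\ge1\) and \(n+1/2\le N\), so \eqref{eq:NB-prefix} applies at both points. Taking the difference gives
\[
 L\bigl(q_N(n+1/2)-q_N(n-1/2)\bigr)=\beta_N-\bigl(\psi(n+1/2)-\psi(n-1/2)\bigr)=\beta_N-\Lambda(n),
\]
because \(\psi\) jumps only at integers and its jump at \(n\) is \(\Lambda(n)\). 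Rearranging proves \eqref{eq:NB-jump}.
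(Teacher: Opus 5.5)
Your proof is correct and follows essentially the same route as the paper. Both proofs split \(\{u/d\}=u/d-\lfloor u/d\rfloor\), evaluate \(\sum_{d\le N}a_N(d)\lfloor u/d\rfloor=\one_{[1,\infty)}(u)+\psi(u)/L\) through the divisor-sum identities for \(\mu\) and \(\mu\log\), and take the unit increment centered at \(n\). You also state explicitly a step the paper leaves implicit: \(u\le N\) lets you drop the truncation \(d\le N\) in the divisor sums.
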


\begin{proof}
For \(2\le m\le N\), the identities
\(\sum_{d\mid m}\mu(d)=0\) and
\(\sum_{d\mid m}\mu(d)\log d=-\Lambda(m)\) give
\(\sum_{d\mid m}a_N(d)=\Lambda(m)/L\).
The sum of \(a_N(d)\) over the divisors of \(1\) is \(1\).
Thus, for \(0<u\le N\),
\[
 \sum_{d\le N}a_N(d)\left\lfloor\frac{u}{d}\right\rfloor
 =\one_{[1,\infty)}(u)+\frac{\psi(u)}{L}.
\]
Substituting this into \eqref{eq:residual-def} proves
\eqref{eq:NB-prefix}. Taking a unit increment centered at \(n\)
proves \eqref{eq:NB-jump}.
\end{proof}

\subsection{Second differences of integrals of squared increments}

For \(x\ge2\)
and \(H\ge0\), define
\begin{equation}\label{eq:F-def}
 \Psi_x(u)=\sum_{n\le x}\Lambda(n)\one_{[n,\infty)}(u),\qquad
 F_x(H)=\int_\R\bigl(\Psi_x(u+H)-\Psi_x(u)\bigr)^2\dd u.
\end{equation}
On \(0<u\le x\le N\), the first function equals
\(\beta_Nu-Lq_N(u)\); it is zero for \(u\le0\) and constant for
\(u\ge x\). The integral in \eqref{eq:F-def} is finite.

\begin{proposition}\label{prop:energy}
For each positive integer \(j\),
\begin{equation}\label{eq:energy-difference}
 F_x(j+1)-2F_x(j)+F_x(j-1)
 =2\sum_{n+j\le x}\Lambda(n)\Lambda(n+j).
\end{equation}
Under Hypothesis~\ref{hyp:ratios}, the fixed even difference \(h\) gives
\begin{equation}\label{eq:energy-asymptotic}
 F_x(h+1)-2F_x(h)+F_x(h-1)=2\Sg(h)x+o(x).
\end{equation}
Consequently, for every \(0<c<2\Sg(h)\), this second difference
is at least \(cx\) for every sufficiently large real \(x\).
\end{proposition}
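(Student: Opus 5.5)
The plan is to compute \(F_x(H)\) exactly as a quadratic form in the coefficients \(\Lambda(n)\), take the second difference in \(H\), and then feed in \eqref{eq:Lambda-HL}. Write \(\Psi_x(u+H)-\Psi_x(u)=\sum_{n\le x}\Lambda(n)\one_{(u,u+H]}(n)\); this is the same as \(\sum_{n\le x}\Lambda(n)\one_{[n-H,n)}(u)\) up to a null set of \(u\). Squaring and integrating in \(u\) gives
\[
 F_x(H)=\sum_{m,n\le x}\Lambda(m)\Lambda(n)\,
 \bigl|[m-H,m)\cap[n-H,n)\bigr|
 =\sum_{m,n\le x}\Lambda(m)\Lambda(n)\,(H-|m-n|)_+ .
\]
Both sums are finite, so there are no convergence issues. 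Grouping the terms by \(k=|m-n|\), we get
\[
 F_x(H)=\sum_{k\ge0}c_k\,(H-k)_+,
\]
where \(c_0=\sum_{n\le x}\Lambda(n)^2\) and, for \(k\ge1\), \(c_k=2\sum_{n+k\le x}\Lambda(n)\Lambda(n+k)\).

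Next I take the second difference of the piecewise-linear function \(H\mapsto(H-k)_+\) at an integer \(j\ge1\). Its value is \((j+1-k)_+-2(j-k)_++(j-1-k)_+\), which is \(1\) when \(k=j\) and \(0\) for every other integer \(k\ge0\). For \(k\le j-1\) all three arguments are nonnegative and the expression is linear in \(j\), so it vanishes. For \(k\ge j+1\) all three terms are zero. At \(k=j\) the terms are \(1-0+0=1\). Since \(j\ge1\), the diagonal weight \(c_0\) drops out. Therefore the second difference equals \(c_j=2\sum_{n+j\le x}\Lambda(n)\Lambda(n+j)\), which is \eqref{eq:energy-difference}.

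For \eqref{eq:energy-asymptotic} I take \(j=h\). Then \(\sum_{n+h\le x}\Lambda(n)\Lambda(n+h)=\sum_{n\le x-h}\Lambda(n)\Lambda(n+h)\), and \eqref{eq:Lambda-HL} with \(y=x-h\) gives \(\Sg(h)(x-h)+o(x)=\Sg(h)x+o(x)\). Doubling this gives \(2\Sg(h)x+o(x)\). The lower bound follows at once: for \(0<c<2\Sg(h)\), the \(o(x)\) term is eventually smaller than \((2\Sg(h)-c)x\), so the second difference is at least \(cx\) for all sufficiently large real \(x\).

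The only delicate point is the bookkeeping in the overlap computation. I need the half-open interval conventions to match \(\one_{[n,\infty)}\), so that the overlap length is exactly \((H-|m-n|)_+\) also at integer \(H\). I also need \(j\ge1\), which keeps the diagonal term out. The asymptotic step is immediate from Theorem~\ref{thm:main}.
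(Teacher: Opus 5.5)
Your proposal is correct and follows the paper's own argument. You expand the square using the overlap length \((H-|m-n|)_+\), note that the second difference of \((H-k)_+\) at \(H=j\ge1\) isolates \(k=j\), and apply \eqref{eq:main-Lambda} with \(y=x-h\); you also spell out the case check for the second difference and the final lower bound, which the paper leaves implicit.
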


\begin{proof}
The intersection of \([n-H,n)\) and \([m-H,m)\) has length
\((H-|m-n|)_+\). Expanding the square gives
\[
 F_x(H)=H\sum_{n\le x}\Lambda(n)^2+
 2\sum_{1\le k<H}(H-k)\sum_{n+k\le x}\Lambda(n)\Lambda(n+k).
\]
The second difference at \(H=j\) of \((H-k)_+\) is \(1\) for \(k=j\)
and \(0\) for the other positive integers \(k\). This proves
\eqref{eq:energy-difference}.
By \eqref{eq:main-Lambda}, taking \(y=x-h\) gives
\[
 2\sum_{n+h\le x}\Lambda(n)\Lambda(n+h)
 =2\Sg(h)(x-h)+o(x)=2\Sg(h)x+o(x).
\]
This proves \eqref{eq:energy-asymptotic}.
\end{proof}

\subsection{The integral relation between the second difference and \texorpdfstring{\(B_h\)}{Bh}}

\begin{proposition}\label{prop:energy-series}
For every real \(x\ge2\), unconditionally,
\begin{equation}\label{eq:energy-series}
 0\le
 \frac12\int_2^x
       \frac{F_t(h+1)-2F_t(h)+F_t(h-1)}{t^2}\dd t-B_h(x)
 \ll_h1.
\end{equation}
\end{proposition}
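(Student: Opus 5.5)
By \eqref{eq:energy-difference} with \(j=h\), the integrand in \eqref{eq:energy-series} equals
\[
 \frac{2}{t^2}\sum_{n+h\le t}\Lambda(n)\Lambda(n+h).
\]
So the first term is
\[
 \int_2^x\frac{S_h(t)}{t^2}\dd t,\qquad S_h(t)=\sum_{n+h\le t}\Lambda(n)\Lambda(n+h).
\]
I will compare \(S_h(t)\) with \(\widetilde A_h(t)\), using the integral representation of \(B_h\) from the proof of Corollary~\ref{cor:series}. That proof shows
\[
 B_h(x)=\int_2^x\widetilde A_h(t)t^{-2}\dd t.
\]
Interchanging sum and integral in the same way gives
\[
 \int_2^x\frac{S_h(t)}{t^2}\dd t=\sum_{n+h\le x}\Lambda(n)\Lambda(n+h)\Bigl(\frac1{n+h}-\frac1x\Bigr).
\]
(When \(n+h<2\), the integral starts at \(2\) rather than at \(n+h\), but \(n+h\ge h+1\ge3\), so this case does not arise.) Hence the difference in \eqref{eq:energy-series} equals
\[
 \sum_{n+h\le x}\bigl(\Lambda(n)\Lambda(n+h)-c_n\bigr)\Bigl(\frac1{n+h}-\frac1x\Bigr),
\]
where \(c_n=\log n\log(n+h)\) if \(n\) and \(n+h\) are both prime, and \(c_n=0\) otherwise.

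\emph{Nonnegativity.} Each summand is nonnegative. The weight \(1/(n+h)-1/x\) is nonnegative for \(n+h\le x\). Also \(\Lambda(n)\Lambda(n+h)\ge c_n\): when both are prime the two sides are equal, and otherwise \(c_n=0\le\Lambda(n)\Lambda(n+h)\).

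\emph{Upper bound.} The surviving terms are those with \(n\) or \(n+h\) a proper prime power \(p^k\), \(k\ge2\), and both \(\Lambda\)-values nonzero. I bound each weight by \(1/(n+h)\) and each summand by \(\Lambda(n)\Lambda(n+h)\le\log n\log(n+h)\). The contribution with \(n=p^k\) is then at most
\[
 \sum_{p,\,k\ge2}\frac{\log^2(p^k+h)}{p^k}\ll_h\sum_{p,\,k\ge2}\frac{k^2\log^2p}{p^k}<\infty,
\]
since \(\sum_{k\ge2}k^2p^{-k}\ll p^{-2}\). The case \(n+h=p^k\) is bounded the same way, using \(\log n\le\log(n+h)\). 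This gives \(O_h(1)\), uniformly in \(x\). No hypothesis is needed; the bound is unconditional.

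The only delicate point is bookkeeping: checking that the lower integration limit \(2\) in \eqref{eq:energy-series} matches the summation range \(n+h\le x\), exactly as in the proof of Corollary~\ref{cor:series}. The rest is elementary.
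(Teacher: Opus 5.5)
Your proof is correct and follows essentially the paper's argument: both use \eqref{eq:energy-difference} and the finite sum--integral interchange from Corollary~\ref{cor:series} to reduce the difference to pairs in which \(n\) or \(n+h\) is a proper prime power. The only difference is bookkeeping: the paper bounds the cumulative discrepancy \(\sum_{n+h\le t}\Lambda(n)\Lambda(n+h)-A_h(t-h)\ll\sqrt t\log^3(2+t)\) and integrates it against \(t^{-2}\), while you interchange first and bound the weighted sum termwise by the convergent series \(\sum_{p,\,k\ge2}k^2\log^2p/p^k\), which also makes the nonnegativity visible term by term.
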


\begin{proof}
With \(A_h(u)=0\) for \(u<2\), define
\[
 E(t)=\sum_{n+h\le t}\Lambda(n)\Lambda(n+h)-A_h(t-h).
\]
Only terms in which at least one argument is a proper prime power
contribute to \(E(t)\). The same elementary count used in
Section~\ref{sec:unsmoothing} gives, for \(t\ge2\),
\[
 0\le E(t)\ll \sqrt t\log^3(2+t).
\]
Equation~\eqref{eq:energy-difference} and the finite-sum interchange
in the proof of Corollary~\ref{cor:series} identify the difference
in \eqref{eq:energy-series} with
\[
 \int_2^x\frac{E(t)}{t^2}\dd t
 \ll\int_2^\infty\frac{\log^3(2+t)}{t^{3/2}}\dd t<\infty.
\]
\end{proof}

\section*{Acknowledgements}
I thank Dale Smith for his question, which prompted the extension of the argument to general fixed even gaps.

\section*{Declaration}
ChatGPT was used in generating proofs.
\bibliographystyle{plainnat}
\bibliography{prime_pairs_fixed_even_difference}

@article{HardyLittlewood1923,
  author  = {Hardy, G. H. and Littlewood, J. E.},
  title   = {Some problems of {\lq {Partitio} numerorum\rq}; {III}: On the expression of a number as a sum of primes},
  journal = {Acta Mathematica},
  volume  = {44},
  pages   = {1--70},
  year    = {1923},
  doi     = {10.1007/BF02403921}
}

@article{ConreySnaith2007,
  author  = {Conrey, J. B. and Snaith, N. C.},
  title   = {Applications of the {$L$}-functions ratios conjectures},
  journal = {Proceedings of the London Mathematical Society},
  series  = {3},
  volume  = {94},
  number  = {3},
  pages   = {594--646},
  year    = {2007},
  doi     = {10.1112/plms/pdl021},
  eprint  = {math/0509480},
  archivePrefix = {arXiv},
  primaryClass = {math.NT}
}

@article{ConreyKeating2016,
  author  = {Conrey, Brian and Keating, Jonathan P.},
  title   = {Pair correlation and twin primes revisited},
  journal = {Proceedings of the Royal Society A},
  volume  = {472},
  pages   = {20160548},
  year    = {2016},
  doi     = {10.1098/rspa.2016.0548},
  eprint  = {1604.06124},
  archivePrefix = {arXiv},
  primaryClass = {math.NT}
}

@article{KeatingSmith2019,
  author  = {Keating, J. P. and Smith, D. J.},
  title   = {Twin prime correlations from the pair correlation of {Riemann} zeros},
  journal = {Journal of Physics A: Mathematical and Theoretical},
  volume  = {52},
  number  = {36},
  pages   = {365201},
  year    = {2019},
  doi     = {10.1088/1751-8121/ab3521},
  url     = {https://ora.ox.ac.uk/objects/uuid%3A2b3a3676-bbc1-487b-bbd6-e5c50f82aca2/files/m08a756d64e096bc9f1dfbaa467520ff1},
  note    = {Cited version: author accepted manuscript},
  eprint  = {1903.07057},
  archivePrefix = {arXiv},
  primaryClass = {math.NT}
}

@book{Titchmarsh1986,
  author    = {Titchmarsh, E. C.},
  title     = {The Theory of the {Riemann} Zeta-function},
  edition   = {Second},
  publisher = {Clarendon Press},
  address   = {Oxford},
  year      = {1986},
  note      = {Revised by D. R. Heath-Brown},
  isbn      = {9780198533696}
}

@article{BettinConreyFarmer2013,
  author  = {Bettin, S. and Conrey, J. B. and Farmer, D. W.},
  title   = {An optimal choice of {Dirichlet} polynomials for the {Nyman--Beurling} criterion},
  journal = {Proceedings of the Steklov Institute of Mathematics},
  volume  = {280},
  number  = {suppl. 2},
  pages   = {S30--S36},
  year    = {2013},
  doi     = {10.1134/S0081543813030036},
  eprint  = {1211.5191},
  archivePrefix = {arXiv},
  primaryClass = {math.NT}
}
\end{document}